\colorlet{cite}{LimeGreen!50!Green}
\tikzset{ 
  baseline=-2.3pt,
  text height=1.5ex, text depth=0.25ex,
  >=stealth,
  node distance=2cm,
  mid/.style={fill=white,inner sep=2.5pt},
}
\newtheoremstyle{mydef}
  {}		
  {}		
  {}		
  {}		
  {\scshape}	
  {. }		
  { }		
  {\thmname{#1}\thmnumber{ #2}\thmnote{ #3}}	
\theoremstyle{plain}	
\newtheorem{theorem}{Theorem} 
\newtheorem*{theorem*}{Theorem}
\newtheorem{corollary}[theorem]{Corollary}
\newtheorem*{corollary*}{Corollary}
\newtheorem*{proposition*}{Proposition}
\newtheorem{proposition}{Proposition}
\newtheorem*{lemma*}{Lemma}
\newtheorem{lemma}{Lemma}
\theoremstyle{mydef} 
\newtheorem{definition}{Definition}
\newtheorem*{conjecture*}{Conjecture}
\theoremstyle{remark}
\newtheorem{remark}{Remark}
\newtheorem{notation}{Notation}
\newtheorem{example}{Example}
\theoremstyle{definition}
\theoremstyle{remark}
\DeclareMathOperator{\Pic}{Pic}
\DeclareMathOperator{\End}{End}
\DeclareMathOperator{\rk}{rk}
\DeclareMathOperator{\red}{red}
\DeclareMathOperator{\reg}{reg}
\DeclareMathOperator{\irreg}{irreg}
\newcommand{\ce}{\mathrel{\mathop:}=}
\begin{document}
\author{{\bf Edoardo Ballico and Elizabeth Gasparim}\\ \\
{\tiny \em Univ. of Trento, Italy, \& Univ. Catolica del Norte, Chile} \\ 
\tiny{edoardo.ballico@unitn.it, \& etgasparim@gmail.com}}

\address{}
\email{}
\title[Irregular bundles on Hopf surfaces]{Irregular bundles on Hopf surfaces}


\begin{abstract}
We discuss the hypersurfaces of the moduli spaces of rank $2$ vector bundles 
on a classical Hopf surface formed by irregular bundles.
\end{abstract}

\maketitle

\textbf{Keywords:} Moduli spaces, vector bundles, Hopf surfaces.

\tableofcontents

\section{Statements of results}
Let $\pi\colon X\to \mathbb P^1$ be a classical Hopf surface  with $T$ as its elliptic fiber. 
  For all $n>0$, the moduli space $\mathcal M_n$ 
  of $SL(2,\mathbb C)$  holomorphic bundles with $c_2=n$ 
  on $X$ contains an open dense subset $\mathcal M_n(0)$ formed by bundles without jumps. 
  The goal of this note is to describe some of the geometric features of 
  $\mathcal M_n(0)$. 
  \\

A bundle $E \in \mathcal M_n$ is called 
  irregular when its restriction to any fiber $T$ of $X$ has automorphism group of dimension 4
  (Def.\thinspace \ref{irregdef}), this happens whenever the restriction of $E\vert_T= L\oplus L$ 
  with $L^2=\mathcal O$, that is when $L$ is one of the 4 thetas. 
  We then compare regular and irregular bundles without jumps. We first show:

\begin{theorem*}[\ref{aa1}]
Fix $E\in \mathcal M_n(0)$, $n\ge 2$. 
\begin{itemize}
\item[(a)] If $E\in \mathcal M_n^{\reg}(0)$, then $C_E$ is smooth and connected.
\item[(b)] If $E\in \mathcal M_n^{\irreg}(0)$, then $C_E$ is singular and irreducible.
\end{itemize}
\end{theorem*}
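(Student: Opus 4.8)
The plan is to read off both statements from the structure of $C_E$ as a bisection of the relative Jacobian. Recall that for $E\in\mathcal M_n(0)$ the restriction $E|_{T_p}$ to the fiber over $p\in\mathbb P^1$ is, by Atiyah's classification of semistable degree-zero bundles on an elliptic curve, S-equivalent to a sum $L_p\oplus L_p^{-1}$ with $L_p\in\Pic^0(T_p)$, so that $C_E$ is the closure in $\widehat X\ce\Pic^0(X/\mathbb P^1)$ of the locus $\{(p,[L_p]),(p,[L_p^{-1}])\}$. It is a bisection of the relative Jacobian $\widehat X\to\mathbb P^1$, invariant under the fiberwise inversion $\iota\colon[L]\mapsto[L^{-1}]$, and $\iota$ is precisely the deck transformation of the degree-two map $C_E\to\mathbb P^1$. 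The ramification points of this cover are the fixed points of $\iota$ on $C_E$, i.e. the points where $[L_p]=[L_p^{-1}]$, equivalently where the two summands of $E|_{T_p}$ collide into a $2$-torsion class; the branch divisor $B\subset\mathbb P^1$ is supported on these special values of $p$. I would therefore reduce everything to two questions: when is the cover connected, and when is $B$ reduced.

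For (a), connectedness comes from non-splitting. If $C_E$ were disconnected it would be two disjoint sections exchanged by $\iota$, which forces $E|_{T_p}\cong\mathcal L|_{T_p}\oplus\mathcal L^{-1}|_{T_p}$ for a global line bundle $\mathcal L$ on $X$, hence $E\cong\mathcal L\oplus\mathcal L^{-1}$; but a split $SL(2,\mathbb C)$-bundle has $c_2=-c_1(\mathcal L)^2=0$, since $c_1(\mathcal L)=0$ for every line bundle on a Hopf surface (as $H^2(X,\mathbb Z)=0$), contradicting $n\ge 2$. Smoothness is the local statement that $B$ is reduced: near a ramification point $(p_0,[L])$ the cover has the normal form $y^2=f(t)$ with $t$ a coordinate on $\mathbb P^1$ at $p_0$, and $C_E$ is smooth there exactly when $f$ has a simple zero. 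I would show that for $E\in\mathcal M_n^{\reg}(0)$ the collision at $p_0$ is of the decomposable type $E|_{T_{p_0}}\cong L\oplus L$ — the indecomposable (Atiyah) degeneration $F_2\otimes L$ being exactly what the no-jump hypothesis excludes — and that such a decomposable collision produces a transverse meeting of the two sheets, i.e. a simple zero of $f$. Granting that $B$ is reduced and non-empty, its degree being the $n$-dependent quantity forced by $c_2(E)=n$ and positive for $n\ge 2$, the double cover is a smooth connected curve.

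For (b), the irregular hypothesis says $E|_{T_p}\cong L\oplus L$ with $L^2=\mathcal O$ on \emph{every} fiber, so the two spectral points coincide for all $p$: the reduced support of $C_E$ is the single $2$-torsion section $\sigma_L=\{(p,[L])\}\cong\mathbb P^1$, and the cover degenerates totally rather than along a divisor. Computing the fiberwise spectral datum — the divisor on $\widehat T_p$ attached to the S-equivalence class of $E|_{T_p}$ — gives $2[L]$ at every $p$, so as a subscheme $C_E=2\sigma_L$ is the non-reduced double of a $\mathbb P^1$. This is irreducible, its underlying topological space being the single curve $\sigma_L$, and singular, being non-reduced at each of its points; hence (b).

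The main obstacle is the local analysis at the ramification points in (a): one has to make the spectral correspondence precise enough near a fiber where the summands collide to pin down the exact scheme structure of $C_E$ there, and in particular to prove the equivalence ``no jump at $p_0$ $\Leftrightarrow$ simple branch point''. This is where the hypotheses $E\in\mathcal M_n(0)$ and $E\in\mathcal M_n^{\reg}(0)$ do the real work, separating the mild decomposable collision $L\oplus L$ (simple zero of $f$, smooth point of $C_E$) from the forbidden indecomposable degeneration $F_2\otimes L$ (double zero of $f$, node of $C_E$).
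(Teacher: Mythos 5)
Your framework --- the spectral curve as an $\iota$-invariant bisection of $\mathbb P^1\times T^*$, with smoothness governed by simple branching --- is essentially the paper's setup, but the dichotomy you use to decide smoothness is inverted, and this is fatal. Regularity means $\dim \mathrm{Aut}(E|_{T})=2$ on every fiber. At a fiber $T_{p_0}$ where the two spectral values collide (necessarily at a theta $L$), the two possible semistable restrictions are the decomposable $L\oplus L$, which has automorphism group of dimension $4$ and is by definition the \emph{irregular} case, and the indecomposable Atiyah-type bundle $F_2\otimes L$, which has dimension $2$ and is the \emph{regular} case. Moreover, the no-jump hypothesis does not exclude $F_2\otimes L$: jumps are fibers with unstable restriction $R\oplus R^\vee$, $\deg R\ne 0$, and $F_2\otimes L$ is semistable. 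The correct correspondence (the paper's Lemma \ref{aa2}, via \cite[Remark 2.8]{T}) is: $G(E)$ transversal to $D_i$ at the collision point $\Leftrightarrow$ $E|_{T_{p_0}}\cong F_2\otimes L$ indecomposable $\Leftrightarrow$ $C_E$ smooth over that point; tangency $\Leftrightarrow$ $L\oplus L$ $\Leftrightarrow$ a singular point of $C_E$. With your assignment (regular $=$ decomposable $=$ transverse), parts (a) and (b) would come out exactly interchanged, so the plan cannot be completed as written.

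Part (b) rests on a second, independent error: you read ``irregular'' as ``$E|_{T_p}\cong L\oplus L$ on \emph{every} fiber''. In the paper $\mathcal M_n^{\irreg}(0)=\mathcal M_n(0)\setminus \mathcal M_n^{\reg}(0)$, so irregularity means \emph{some} fiber restriction is $L\oplus L$ with $L$ a theta. A bundle restricting to the same $L\oplus L$ on all fibers would have its graph supported on the horizontal line $\mathbb P^1\times\{a_i\}$, which is impossible: for $E\in\mathcal M_n(0)$ the graph is an irreducible member of $|\mathcal O_{\mathbb P^1\times\mathbb P^1}(n,1)|$ with $n\ge 2$, so the spectral values move with degree $n$. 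Hence the locus you analyze is empty, and the non-reduced curve $2\sigma_L$ never occurs; the true $C_E$ is the reduced, irreducible preimage of the irreducible graph under the double cover $\mathbb P^1\times T^*\to \mathbb P^1\times\mathbb P^1$, of arithmetic genus $2n-1$, singular precisely over the tangency points. The paper gets singularity by Riemann--Hurwitz (a smooth connected double cover of $\mathbb P^1$ of genus $2n-1$ needs $4n$ ramification points, whereas irregularity leaves at most $4n-1$ distinct points of $G(E)\cap\Sigma$), and irreducibility because any component of a reducible $C_E$ would be a horizontal line $\mathbb P^1\times\{o\}$, incompatible with the bidegree. Finally, a smaller gap in your connectedness step: fiberwise splitting does not yield $E\cong\mathcal L\oplus\mathcal L^{-1}$ globally (at best a possibly non-split extension, and one must also lift the section of the relative Jacobian to an actual line bundle on $X$); the paper's bidegree argument avoids this entirely, since a disconnected $C_E$ would again consist of horizontal lines.
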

 Accordingly,  
  we define a concept of weight of a bundle
(Def.\thinspace \ref{weights}) 
$$w(E) = w_1(E)+w_2(E)+w_3(E)+w_4(E)$$
intended as a measure 
of irregularity, and prove:
\begin{theorem*}[\ref{irregw}] $E\in \mathcal M_n(0)$ is irregular if and only if $w(E)>0$.
\end{theorem*}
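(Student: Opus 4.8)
The plan is to reduce both conditions — irregularity of $E$ and positivity of $w(E)$ — to one and the same statement about the restrictions $E\vert_{T_x}$, and then to match them. By Definition \ref{irregdef} together with the discussion preceding the statement, $E\in\mathcal M_n(0)$ is irregular exactly when there is a fiber $T_x=\pi^{-1}(x)$ with $\dim\mathrm{Aut}(E\vert_{T_x})=4$, which on an elliptic curve happens if and only if $E\vert_{T_x}\cong L_i\oplus L_i$ for one of the four thetas $L_i$ (the line bundles with $L_i^{2}=\mathcal O_T$). So the target is the fiberwise criterion
\[
(\ast)\qquad \exists\,x\in\mathbb P^1,\ \exists\,i\in\{1,2,3,4\}:\quad E\vert_{T_x}\cong L_i\oplus L_i .
\]

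The second ingredient is the detection property of the individual weights. From Definition \ref{weights} each $w_i(E)$ is a nonnegative integer (it is a length, respectively intersection multiplicity, of the restriction curve $C_E$ against the $i$-th theta locus), so I would first record $w_i(E)\ge 0$. The heart of the argument is then the lemma
\[
w_i(E)>0 \iff \exists\,x\in\mathbb P^1:\ E\vert_{T_x}\cong L_i\oplus L_i ,
\]
which I would prove by analyzing $C_E$ along the $i$-th theta section. For the hard direction, the no-jumps hypothesis ($E\in\mathcal M_n(0)$) guarantees that over the generic fiber $E\vert_{T_x}\cong L_x\oplus L_x^{-1}$ with $L_x\neq L_x^{-1}$, so $C_E$ is an honest bisection of the relative moduli space; a fiber with $E\vert_{T_x}\cong L_i\oplus L_i$ is exactly a point where $C_E$ meets the fixed section $\Theta_i$, and such contact is precisely what $w_i$ measures, forcing $w_i(E)>0$. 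The converse is the definitional statement that $w_i(E)=0$ whenever $C_E$ is disjoint from $\Theta_i$.

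Granting the lemma, the theorem follows formally: since every $w_i(E)\ge 0$, we have $w(E)=\sum_{i=1}^4 w_i(E)>0$ if and only if $w_i(E)>0$ for some $i$, which by the lemma is equivalent to $(\ast)$, i.e. to $E$ being irregular. I expect the main obstacle to be the hard direction of the lemma — showing that the mere existence of one degenerate fiber $E\vert_{T_x}\cong L_i\oplus L_i$ forces strictly positive weight. This is where I would lean on Theorem \ref{aa1}(b): for irregular $E$ the curve $C_E$ is singular and irreducible, so the contact of $C_E$ with the relevant $\Theta_i$ cannot be empty, and the singularity localizes exactly at the doubled-theta fibers. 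The delicate point is the local bookkeeping of this contact, together with verifying that the no-jumps condition rules out the degenerate fiber being ``invisible'' to $w_i$ (for instance through a spurious cancellation between the two sheets of the bisection meeting $\Theta_i$), so that a genuine degenerate fiber always contributes a strictly positive amount.
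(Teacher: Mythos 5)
Your reduction of irregularity to the fiberwise criterion $(\ast)$ is fine, and so is the formal last step (all $w_i\ge 0$, so $w(E)>0$ iff some $w_i(E)>0$). The gap is in your central lemma, whose two directions as stated are respectively unproven and vacuous, because you conflate \emph{incidence} with \emph{tangency}. The graph $G(E)$ has bidegree $(n,1)$, so $Z_{G(E)}(i)=G(E)\cap D_i$ is a degree-$n$ scheme for \emph{every} $E\in\mathcal M_n(0)$; correspondingly the spectral curve $C_E$ always meets each theta section $\Theta_i$ (these meetings are exactly the ramification points of $C_E\to G(E)$). In particular, for a regular bundle the graph still crosses each $D_i$, in $n$ distinct reduced points, and over each such point the restriction $E\vert_{T_x}$ is the \emph{non-split} self-extension of $L_i$ (automorphism group of dimension $2$), not $L_i\oplus L_i$. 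So ``$C_E$ meets $\Theta_i$'' detects nothing, your converse direction (``$w_i(E)=0$ whenever $C_E$ is disjoint from $\Theta_i$'') concerns a situation that never occurs, and your appeal to Theorem \ref{aa1}(b) (``the contact cannot be empty'') is a non sequitur for the same reason.

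What the weight actually measures is the failure of transversality: by Definition \ref{weights}, $w_i(E)>0$ if and only if the scheme $Z_{G(E)}(i)$ is non-reduced, i.e. $G(E)$ is tangent to $D_i$ somewhere. Hence the real content of the theorem is the bridge: $E\vert_{T_x}\cong L_i\oplus L_i$ (split, $\dim\mathrm{Aut}=4$) if and only if $G(E)$ meets $D_i$ \emph{non-transversally} at the corresponding point, whereas transversal crossings give the indecomposable Atiyah-type restriction. This is exactly what the paper invokes in its one-line proof via the identity $\mathcal M_n^{\irreg}(0)=\bigcup_{i=1}^4 \mathcal H_i$, where $\mathcal H_i=G^{-1}(H_i)$ is the preimage of the \emph{tangency} locus of Notation \ref{defH}; one direction of the dichotomy is Lemma \ref{aa2} (transversality at a theta point forces indecomposability of the restriction), the other is the remark identifying irregularity with non-reducedness of some $Z_{G(E)}(i)$. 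Your proposal never establishes this split-iff-tangent dichotomy, and without it the hard direction (a degenerate fiber forces $w_i>0$ rather than a transversal intersection point) remains unproved.
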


Having identified  $\mathcal M_n^{\irreg}(0)$ as the set of all bundles $E \in \mathcal M_n^{sg}$ without jumps and  
having singular spectral curve $C_E$ (Not.\thinspace \ref{nota}), we prove:

\begin{theorem*}[\ref{Wrr5}]
Assume $n>1$. For all $E\in \mathcal M_n^{\irreg}(0)$  we have:
\begin{enumerate}
\item $w(E)\le 2n-2$.
\item $E$ has $(n, 0,\dots, 0)$ has its $i$-th profile if and only if $w_i(E)\le n-1$ for all $i$ and $w_i(E)=n-1$ .
\item Fix $i\in \{1,2,3,4\}$, a positive integer $\ell$ and a restricted profile $(m_1,\dots ,m_\ell)$. 
There exists a non-empty locally closed analytic subset $\Gamma \in \mathcal M_n^{\irreg}(0)$
such that  all  $G(E)\in |\mathcal O_{\mathbb P^1\times \mathbb P^1}(n,1)|$ have   $(m_1,\dots ,m_\ell)$ as $i$-th profile
and this set of graphs $G(E)$  has codimension at most $-\ell +m_1+\cdots +m_\ell$ in $|\mathcal O_{\mathbb P^1\times \mathbb P^1}(n,1)|$.
\end{enumerate}
\end{theorem*}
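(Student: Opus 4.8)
The plan is to transport everything to the space of graphs and argue with binary forms. A point of $|\mathcal O_{\mathbb P^1\times\mathbb P^1}(n,1)|\cong\mathbb P^{2n+1}$ is the class $[p:q]$ of a pair of degree-$n$ binary forms, and its graph $G=\{p\,y_0+q\,y_1=0\}$ is the graph of the degree-$n$ map $g\colon\mathbb P^1\to\mathbb P^1$, $x\mapsto[q(x):-p(x)]$. Normalising the $i$-th theta so that $g^{-1}(\theta_i)$ is the zero-scheme of $p$, the $i$-th profile of $G$ is exactly the partition of $n$ by the root multiplicities of $p$, and $w_i(E)$ is the ramification of $g$ over $\theta_i$, namely $\sum_a(m_a-1)$. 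With this dictionary (1) and (2) become Riemann--Hurwitz bookkeeping: $g$ has total ramification $2n-2$, the four thetas are distinct points of $\mathbb P^1$, so $w(E)=\sum_{i=1}^4 w_i(E)$ is a subsum of $2n-2$, giving (1); and over the single point $\theta_i$ one has $\sum_a(m_a-1)=n-\#g^{-1}(\theta_i)\le n-1$, with equality iff $g^{-1}(\theta_i)$ is a single point, i.e. iff the $i$-th profile is $(n,0,\dots,0)$, which is (2).

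For (3) I would run a parameter count on the numerator. Let $(m_1,\dots,m_\ell)$ be the given restricted profile, so $m_a\ge 1$ and $\sum_a m_a\le n$, and form the incidence variety
$$\mathcal I=\big\{\,((t_1,\dots,t_\ell),[p:q]) : t_a\ \text{distinct},\ \operatorname{ord}_{t_a}p\ge m_a\ \forall a\,\big\}\subset(\mathbb P^1)^\ell\times|\mathcal O(n,1)|.$$
Over a fixed tuple the conditions $\operatorname{ord}_{t_a}p\ge m_a$ are $\sum_a m_a$ linear conditions on the $(n+1)$-dimensional space of numerators, independent since the $t_a$ are distinct and $\sum_a m_a\le n$, while $q$ stays free; the fibre is therefore a linear $\mathbb P^{\,2n+1-\sum_a m_a}$ and $\dim\mathcal I=2n+1-(\sum_a m_a-\ell)$. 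A general graph in the image determines its $\ell$ distinguished multiple roots up to finite ambiguity, so the second projection $\mathcal I\to|\mathcal O(n,1)|$ is generically finite onto its image $\Sigma$; hence $\Sigma$ --- the locally closed stratum of graphs with $i$-th profile $(m_1,\dots,m_\ell)$ --- has
$$\operatorname{codim}\Sigma=\sum_{a=1}^\ell m_a-\ell=-\ell+m_1+\cdots+m_\ell,$$
in particular at most the asserted value.

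It remains to realise $\Sigma$ inside the moduli space, and this is where the substance lies. I would invoke the surjectivity of the graph map $E\mapsto G(E)\colon\mathcal M_n(0)\to|\mathcal O(n,1)|$ built into the correspondence for bundles without jumps, and set $\Gamma\ce\{E\in\mathcal M_n(0):G(E)\in\Sigma\}$. Then $\Gamma$ is non-empty and, being the preimage of a locally closed set, locally closed; every $E\in\Gamma$ has $(m_1,\dots,m_\ell)$ as its $i$-th profile by construction; since a restricted profile over $\theta_i$ carries a genuine ramification, $w_i(E)=\sum_a(m_a-1)>0$ and Theorem~\ref{irregw} gives $\Gamma\subseteq\mathcal M_n^{\irreg}(0)$; and the image $\{G(E):E\in\Gamma\}=\Sigma$ satisfies the codimension bound of the previous paragraph. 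The main obstacle is exactly this realizability step: one must know the graph map is surjective onto the prescribed stratum $\Sigma$ (not merely onto a dense open set) and control its fibres, so that $\Gamma$ genuinely attains the expected dimension. The theorem is phrased with codimension ``at most'' precisely to absorb any drop caused by non-transversality of the root conditions or by positive-dimensional fibres of $E\mapsto G(E)$.
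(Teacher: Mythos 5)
Your proposal is correct, and its overall strategy coincides with the paper's: parts (1)--(2) are exactly the paper's Riemann--Hurwitz bookkeeping for the degree-$n$ map $D\to\mathbb P^1$ induced by $\pi_2$ on a smooth graph $D\cong\mathbb P^1$, and part (3) is the same dimension count --- impose multiplicities $m_1,\dots,m_\ell$ at $\ell$ chosen points of $D_i$ (costing $m_1+\cdots+m_\ell$ conditions), recover $\ell$ dimensions by moving the points, and transport the resulting stratum into $\mathcal M_n^{\irreg}(0)$ using surjectivity of the graph map and Theorem \ref{irregw}. Where you differ is in the implementation of the count: you work with the binary forms $[p:q]$ and an incidence variety over $(\mathbb P^1)^\ell$, so that the codimension computation is elementary linear algebra, whereas the paper fixes the punctual scheme $Z\subset D_i$, proves $h^1(\mathcal I_Z(n,1))=0$ via restriction sequences and K\"unneth, and then runs a Bertini/base-locus analysis (its steps (b1)--(b5)) to show the general member of $|\mathcal I_Z(n,1)|$ is smooth and has \emph{exactly} the prescribed profile. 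That last point is the one place your write-up is glib: you identify the image of $\mathcal I$ with the exact-profile stratum $\Sigma$, but the image also contains deeper strata and reducible graphs (where $p$ and $q$ share a root, i.e.\ the divisor has a vertical component and the bundle has a jump), so one must check that the locus where $p$ has exactly the multiplicities $m_a$, no other multiple roots, and is coprime to $q$ is dense in each fibre of $\mathcal I$. In your coordinates this is a routine openness argument (take $p=\prod_a(x-t_a)^{m_a}r(x)$ with $r$ having simple roots off the $t_a$ and $q$ generic), so it is a fixable omission rather than a structural gap; it is precisely the content of the paper's steps (b3) and (b5), which need the heavier Bertini machinery only because the paper argues coordinate-freely. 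Your version is more elementary and self-contained; the paper's generalizes more readily to linear systems where explicit normal forms are unavailable.
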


\begin{theorem*}[\ref{Wrr7}]
Fix an ordering $i_1,i_2,i_3,i_4$ of the set $\{1,2,3,4\}$.
\begin{itemize}
\item[(a)] For all $n\ge 2$ there exists  a codimension $2$ locally closed analytic subset $\Gamma \subset \mathcal M_n(0)$ such that $w_{i_1}(E)=w_{i_2}(E)=1$ and $w_{i_3}(E)=w_{i_4}(E)=0$ for all $E\in \Gamma$.

\item[(b)] For all $n\ge 4$ there exists  a codimension $3$ locally closed analytic subset $\Delta \subset \mathcal M_n(0)$ such that $w_{i_1}(E)=w_{i_2}(E)=w_{i_3}(E)=1$ and $w_{i_4}(E)=0$ for all $E\in \Delta$.
\end{itemize}
\end{theorem*}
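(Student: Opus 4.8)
The plan is to realize both loci by the single-theta profile construction of Theorem~\ref{Wrr5}(3), imposed simultaneously at several thetas. It is convenient to view a graph $G(E)\in|\mathcal O_{\mathbb P^1\times\mathbb P^1}(n,1)|$ as the graph of a degree-$n$ self-map $f$ of $\mathbb P^1$ in the elliptic direction: the $i$-th profile of $E$ then records the multiplicities of $f^{-1}(t_i)$ at the $i$-th theta $t_i$, so that $w_i(E)=\sum_{P\in f^{-1}(t_i)}(e_P-1)$ is precisely the ramification of $f$ over $t_i$, and the bound $w(E)\le 2n-2$ of Theorem~\ref{Wrr5}(1) is Riemann--Hurwitz. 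In this dictionary the condition $w_i(E)=1$ is equivalent to $t_i$ being a \emph{simple} branch point, i.e. to the $i$-th restricted profile being $(2)$. Plugging $\ell=1,\ m_1=2$ into Theorem~\ref{Wrr5}(3) gives $\operatorname{codim}\le -\ell+m_1=1$; more generally the estimate reads $\operatorname{codim}\le -\ell+\sum_j m_j=w_i$, so a single weight-one condition is at most codimension $1$, while $w_j(E)=0$ is the open regular condition of Theorem~\ref{aa1}(a).

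For part~(a) I would intersect the two weight-one (simple-branch) conditions at $t_{i_1},t_{i_2}$ with the open regular conditions at $t_{i_3},t_{i_4}$. The profile at $t_i$ depends only on the behaviour of the bidegree-$(n,1)$ curve along the fibre over $t_i$; since the fibres over $t_{i_1}$ and $t_{i_2}$ are disjoint, the two conditions constrain independent jets of $G(E)$ and are transverse. Hence the codimensions add and $\Gamma$ has codimension exactly $2$. Non-emptiness for $n\ge 2$ is clear, since with $\dim|\mathcal O_{\mathbb P^1\times\mathbb P^1}(n,1)|=2n+1\ge 5$ one may prescribe one simple branch point on each of the two chosen fibres while keeping the remaining ramification generic and away from $t_{i_3},t_{i_4}$; by the singular--irreducible dictionary of Theorem~\ref{aa1}(b) the resulting graph lifts to a bundle in $\mathcal M_n^{\irreg}(0)$ with the prescribed weights.

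Part~(b) runs along the same lines with simple-branch conditions at $t_{i_1},t_{i_2},t_{i_3}$, giving codimension $3$ by the same ``disjoint fibres $\Rightarrow$ transverse'' argument. The genuinely new point, and the one I expect to be the main obstacle, is the sharper threshold $n\ge 4$. A naive Riemann--Hurwitz count already permits the profile $(1,1,1,0)$ at $n=3$ (total ramification $4$, with three units over $t_{i_1},t_{i_2},t_{i_3}$ and one free branch point placed away from $t_{i_4}$), so the obstruction cannot be Hurwitz-theoretic: it must come from the $SL(2,\mathbb C)$-structure, which constrains $f$ through the determinant/theta symmetry and couples the ramification over the four $2$-torsion points. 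The crux is therefore to show that for $n\le 3$ this symmetry forces ramification over $t_{i_4}$ as soon as $w_{i_1}=w_{i_2}=w_{i_3}=1$, so that $w_{i_4}=0$ is unattainable, whereas for $n\ge 4$ the free ramification ($2n-5\ge 3$ branch points) supplies enough slack to keep $t_{i_4}$ unramified; making this compatibility analysis rigorous within $\mathcal M_n(0)$ is where the real difficulty lies.

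Finally, both parts require transferring the codimension count from the linear system to the moduli space. I would argue that the graph map $G$ is equidimensional over the relevant loci—its fibre records only the spectral line-bundle data along the elliptic direction and is unaffected by the profile at the thetas—so that the codimension of $\Gamma$ (resp. $\Delta$) in $\mathcal M_n(0)$ equals that of $G(\Gamma)$ (resp. $G(\Delta)$) in $|\mathcal O_{\mathbb P^1\times\mathbb P^1}(n,1)|$. Verifying this equidimensionality, and upgrading the ``at most'' of Theorem~\ref{Wrr5}(3) to an equality for the profile $(2)$, are the two remaining points that need care; granting them, the transversality and the exact codimension values follow formally.
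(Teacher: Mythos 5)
Your overall strategy is the same as the paper's: use surjectivity of the graph map to work entirely in $|\mathcal O_{\mathbb P^1\times \mathbb P^1}(n,1)|$, read $w_i(E)$ as the ramification of the degree-$n$ map $\pi_{2|G(E)}$ over $a_i$, impose simple tangency to $D_{i_1},D_{i_2}$ (and $D_{i_3}$), and keep the remaining thetas transversal. But the two steps you defer as "needing care" are the actual content of the proof, and your "disjoint fibres $\Rightarrow$ transverse, hence codimensions add" is an assertion, not an argument: nothing a priori prevents every graph tangent to $D_{i_1}$ and $D_{i_2}$ from also being tangent to $D_{i_3}$, which would destroy both the exact codimension and the requirement $w_{i_3}(E)=0$. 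The paper supplies this content cohomologically: it fixes tangency points $p\in D_{i_1}$, $q\in D_{i_2}$, sets $Z=(2p,D_{i_1})\cup (2q,D_{i_2})$, proves $h^1(\mathcal I_Z(n,1))=0$ by restricting the ideal sequence to the graph $D\cong \mathbb P^1$ (where $\deg \mathcal O_D(n,1)=2n$), and then varies $(p,q)$ using semicontinuity to get an irreducible locally closed set of exact codimension $2$; generic transversality to $D_{i_3}, D_{i_4}$ is then proved by contradiction, since if all members were tangent to $D_{i_3}$ one could enlarge $Z$ by $(2a,D_{i_3})$ and the same $h^1$-vanishing would force that locus into strictly higher codimension. (Non-emptiness is also handled differently: the paper starts from a general graph with $2n-2$ distinct branch points with distinct images, Proposition \ref{Wrr6}, and moves branch points onto the thetas using the $3$-transitivity of $\mathrm{Aut}(\mathbb P^1)$ acting on the second factor, rather than prescribing monodromy.)

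The more serious problem is part (b), where you misdiagnose the hypothesis $n\ge 4$ and set up your "crux" as proving a statement that is false. You claim that for $n\le 3$ the $SL(2,\mathbb C)$/theta symmetry must force ramification over $t_{i_4}$ whenever $w_{i_1}=w_{i_2}=w_{i_3}=1$. This contradicts your own dictionary: since $G$ surjects onto the smooth elements of $|\mathcal O_{\mathbb P^1\times \mathbb P^1}(n,1)|$, the existence question is purely one about degree-$n$ rational maps with prescribed ramification over four marked points, with no residual coupling. For $n=3$, Riemann existence with monodromy transpositions $(12),(12),(13),(13)$ (product the identity, transitive image in $S_3$) produces a degree-$3$ map $\mathbb P^1\to \mathbb P^1$ whose branch points are exactly $a_{i_1},a_{i_2},a_{i_3}$ and one further point $b\ne a_{i_4}$, all simple; its graph is a smooth element of $|\mathcal O_{\mathbb P^1\times \mathbb P^1}(3,1)|$ and lifts to a bundle with weights $(1,1,1,0)$. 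So no such obstruction exists. The true source of $n\ge 4$ is technical: in the final step of (b) one must adjoin a fourth length-$2$ scheme $(2b,D_{i_4})$, so $\deg(W')=8$, and the needed vanishing $h^1(D,\mathcal I_{W',D}(n,1))=h^1(\mathbb P^1,\mathcal O_{\mathbb P^1}(2n-8))=0$ requires $2n-8\ge -1$, i.e. $n\ge 4$. In other words, the threshold is a limitation of the dimension-count method in the transversality step, not an impossibility at small $n$; your plan would spend its effort attempting to prove that impossibility instead of carrying out the count that the theorem actually needs.
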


\begin{theorem*}[\ref{rr9}]
Fix an integer $n\ge 2$. Fix an ordering $i_1,i_2,i_3,i_4$ of $\{1,2,3,4\}$. Then there exists an irreducible locally closed analytic subset of dimension $2n+2$ of $\mathcal M_n(0)$
such that for all $E\in \Gamma$ the restricted $i_1$-th and $i_2$-th profiles of $E$ are $(n)$, while $w_{i_3}(E)=w_{i_4}(E)=0$.
\end{theorem*}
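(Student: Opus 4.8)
The plan is to exhibit the desired family as the preimage, under the graph morphism $G\colon \mathcal M_n(0)\to |\mathcal O_{\mathbb P^1\times\mathbb P^1}(n,1)|$, of an explicit three–dimensional family of graphs inside the linear system. Recall that $G(E)$ is the graph of the degree–$n$ morphism $\phi_E\colon \mathbb P^1\to \mathbb P^1=T/\{\pm 1\}$, $b\mapsto [E|_{X_b}]$, sending a base point to the $S$–equivalence class of the restriction, that the four thetas are four fixed points $\theta_1,\dots,\theta_4$ of the target $\mathbb P^1$, and that $w_i(E)$ records the ramification profile of $\phi_E$ over $\theta_i$ (the $i$-th profile being the partition of $n$ given by the multiplicities of the points of $\phi_E^{-1}(\theta_i)$). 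In this language the statement asks for bundles whose associated map is totally ramified over $\theta_{i_1}$ and over $\theta_{i_2}$, i.e.\ has restricted profile $(n)$ there, while being unramified over $\theta_{i_3}$ and $\theta_{i_4}$, i.e.\ $w_{i_3}=w_{i_4}=0$.

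First I would identify this locus inside the linear system. Choosing coordinates on the target so that $\theta_{i_1}=0$ and $\theta_{i_2}=\infty$, a degree–$n$ rational map totally ramified over both must have a single zero of order $n$ and a single pole of order $n$, hence is of the form $\phi_{a,b,c}(x)=c\bigl(\tfrac{x-a}{x-b}\bigr)^n$ with $a\ne b$ and $c\in\mathbb C^\ast$. Thus the set $\Sigma\subset |\mathcal O_{\mathbb P^1\times\mathbb P^1}(n,1)|$ of such graphs is the injective image of the parameter space $\{(a,b,c)\}$, an irreducible locally closed subset of dimension $3$. Equivalently, imposing restricted profile $(n)$ at each of the two thetas cuts out codimension $(n-1)+(n-1)=2n-2$ in the $(2n+1)$–dimensional linear system, so that $\dim\Sigma=3$, in agreement with the codimension bound of Theorem \ref{Wrr5}(3).

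Next I would check that the two remaining weight conditions are automatic on $\Sigma$. A direct computation gives $\phi_{a,b,c}'/\phi_{a,b,c}=n(a-b)/\bigl((x-a)(x-b)\bigr)$, so the only critical points are $a$ and $b$, each of ramification index $n$; since these already account for the full Riemann–Hurwitz total $2(n-1)$, the map is unramified over every other point of the target, in particular over $\theta_{i_3}$ and $\theta_{i_4}$. Hence $\phi_{a,b,c}^{-1}(\theta_{i_3})$ and $\phi_{a,b,c}^{-1}(\theta_{i_4})$ each consist of $n$ distinct points and $w_{i_3}(E)=w_{i_4}(E)=0$ for every $E$ with $G(E)\in\Sigma$. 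Since $w(E)=2(n-1)>0$ for $n\ge 2$, Theorem \ref{irregw} places all these bundles in $\mathcal M_n^{\irreg}(0)$, and by Theorem \ref{aa1}(b) their spectral curves $C_E$ are integral.

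Finally I would set $\Gamma\ce G^{-1}(\Sigma)$ and compute its dimension. The restriction of $G$ to $\mathcal M_n(0)$ is an equidimensional fibration whose fibre over a graph is the relevant (compactified) Jacobian of the corresponding spectral curve, of dimension $p_a(C_E)=2n-1$ (equivalently $\dim\mathcal M_n(0)=(2n+1)+(2n-1)=4n$), and over the integral curves parametrized by $\Sigma$ this fibre is irreducible. Therefore $\Gamma$ is irreducible of dimension $3+(2n-1)=2n+2$, and every $E\in\Gamma$ has the required profiles and weights. The main obstacle is exactly this last step: one must verify that $G$ stays equidimensional with irreducible fibres over the \emph{singular} spectral curves parametrized by $\Sigma$, so that $\Gamma$ neither jumps in dimension nor breaks into components. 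This is precisely where the no–jump hypothesis $E\in\mathcal M_n(0)$ is needed, guaranteeing that the fibre over each integral curve $C_E$ (integral by Theorem \ref{aa1}(b)) is the irreducible compactified Jacobian of the expected dimension $2n-1$.
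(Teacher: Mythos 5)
Your proposal is correct in substance and reaches the key three--dimensional family of graphs by a genuinely different, more economical route than the paper. The paper fixes $p\in D_1$, $q\in D_2$, forms the degree-$2n$ scheme $Z=(np,D_1)\cup (nq,D_2)$, shows $|\mathcal I_Z(n,1)|$ is a pencil, and then runs a delicate base-locus/Bertini analysis (including a $(\mathbb C^\ast)^2$-orbit decomposition of $\mathbb P^1\times\mathbb P^1$) to prove that a general member of that pencil is smooth; varying $(p,q)\in D_1\times D_2$ then yields the family, and the case $n=2$ is outsourced to Example \ref{Wrr8}. You instead use that a smooth element of $|\mathcal O_{\mathbb P^1\times \mathbb P^1}(n,1)|$ is the graph of a degree-$n$ morphism $\mathbb P^1\to\mathbb P^1$, so that total ramification over $\theta_{i_1}$ and $\theta_{i_2}$ forces $\phi_{a,b,c}(x)=c\bigl((x-a)/(x-b)\bigr)^n$: smoothness is automatic (a graph is a section of the first projection), the family is visibly irreducible of dimension $3$ and locally closed, and the construction is uniform in $n\ge 2$. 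This bypasses the hardest part of the paper's argument entirely. (Your chart misses the graphs whose total ramification point lies over $x=\infty$ in the source, but that is immaterial since only some irreducible locally closed set of the right dimension is needed.) Your Riemann--Hurwitz computation giving $w_{i_3}=w_{i_4}=0$ is essentially the same mechanism as the paper's appeal to the bound $w(E)\le 2n-2$ of Theorem \ref{Wrr5}. What the paper's heavier machinery buys is generality: the ideal-sheaf cohomology and Bertini technique is the workhorse for arbitrary prescribed profiles in Theorems \ref{Wrr5} and \ref{Wrr7}, where no explicit parametrization is available.

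The one caveat concerns your final step, and the paper shares it. The paper only invokes that all fibers of $G$ have dimension at least $2n-1$, which gives $\dim G^{-1}(\Gamma_1)\ge 2n+2$ but, by itself, neither the exact dimension $2n+2$ nor irreducibility of the preimage. You invoke the stronger statement that over a graph with integral spectral curve the fiber of $G$ is the irreducible compactified Jacobian, of dimension exactly $2n-1$. That statement is not proved in the paper (Lemma \ref{aa3} treats only the regular case, where $C_E$ is smooth) and has to be imported from the spectral-construction literature for non-K\"ahler elliptic fibrations, e.g. \cite{Mo1}, \cite{T}; but some such input is needed to finish either proof, and you are right --- and more explicit than the paper --- in identifying this as the crux.
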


We then describe some features of the topology of $\mathcal M_n^{\reg}(0)$, obtaining:

\begin{theorem*}[\ref{full1bis}]
Fix an integer $n\ge 2$. We have:
\begin{itemize}
\item[(i)]   $H^*(\mathcal M_n^{\reg}(0),\mathbb Z)= H^*(\mathcal A_n,\mathbb Z)\otimes H^*((S^1)^{4n-2},\mathbb Z),$
and 
\item[(ii)]   $H^i(\mathcal M_n^{\reg}(0),\mathbb Z)=0$ \ for all $i\ge 6n$.
\end{itemize}
\end{theorem*}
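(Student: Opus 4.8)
The plan is to exhibit $\mathcal M_n^{\reg}(0)$ as a torus fibration over $\mathcal A_n$ and read off its cohomology by the Künneth/Leray--Hirsch machinery, while controlling the cohomological dimension of the base by affineness.

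First I would set up the spectral fibration. By Theorem \ref{aa1}(a) every $E\in\mathcal M_n^{\reg}(0)$ has a smooth connected spectral curve $C_E$, a bisection of $\mathbb P^1\times T$ meeting the fibre class in degree $n$; adjunction then gives $g(C_E)=2n-1$. Sending $E$ to $C_E$ defines a holomorphic map
\[
\phi\colon \mathcal M_n^{\reg}(0)\longrightarrow \mathcal A_n,
\]
whose base $\mathcal A_n$ is the space of such smooth spectral curves, i.e. the complement of the discriminant in the linear system $|\mathcal O_{\mathbb P^1\times\mathbb P^1}(n,1)|\cong\mathbb P^{2n+1}$. The fibre $\phi^{-1}([C])$ is a torsor under $\operatorname{Jac}(C)$ by the spectral (Fourier--Mukai) correspondence for bundles without jumps, hence a compact complex torus of dimension $g(C)=2n-1$; topologically each fibre is $(S^1)^{4n-2}$.

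For part (i) I would compute $H^*$ from the Leray spectral sequence of $\phi$,
\[
E_2^{p,q}=H^p\bigl(\mathcal A_n;\, \textstyle\bigwedge^{q}\mathcal H\bigr)\ \Rightarrow\ H^{p+q}(\mathcal M_n^{\reg}(0);\mathbb Z),\qquad \mathcal H\ce R^1\phi_*\mathbb Z .
\]
Here $\mathcal H$ is the local system with fibre $H^1(C;\mathbb Z)\cong\mathbb Z^{4n-2}$, and the fibre cohomology is the exterior algebra $\bigwedge^{*}\mathcal H$, which is torsion-free. The key reduction is that the monodromy representation $\pi_1(\mathcal A_n)\to\operatorname{Aut} H^1(C;\mathbb Z)$ is trivial, so that $\mathcal H$ is constant and $E_2^{p,q}=H^p(\mathcal A_n)\otimes\bigwedge^{q}\mathbb Z^{4n-2}$. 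Granting this, the degree-one fibre classes lift to global classes (via the Abel--Jacobi section attached to a universal spectral sheaf), so Leray--Hirsch applies, the sequence degenerates at $E_2$, and—the fibre cohomology being torsion-free—the group extensions split, giving (i).

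For part (ii) I would bound the cohomological dimension of the base. A smooth member of $|\mathcal O_{\mathbb P^1\times\mathbb P^1}(n,1)|$ is the graph of a degree-$n$ map $\mathbb P^1\to\mathbb P^1$, so smoothness is equivalent to irreducibility, and $\mathcal A_n$ is exactly the complement in $\mathbb P^{2n+1}$ of the single irreducible resultant hypersurface; hence $\mathcal A_n$ is smooth and affine of complex dimension $2n+1$. By the Andreotti--Frankel theorem it has the homotopy type of a CW complex of real dimension $\le 2n+1$, so $H^i(\mathcal A_n;\mathbb Z)=0$ for $i>2n+1$. Combined with $H^j((S^1)^{4n-2})=0$ for $j>4n-2$ and the decomposition of (i), every summand of $H^i(\mathcal M_n^{\reg}(0))$ vanishes once $i>(2n+1)+(4n-2)=6n-1$, i.e. $H^i=0$ for $i\ge 6n$.

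The main obstacle is the triviality of the monodromy local system $\mathcal H$ underpinning (i). A priori the $4n$ branch points of the double cover $C\to\mathbb P^1$ braid as $C$ varies over $\mathcal A_n$, which could act nontrivially on $H^1(C;\mathbb Z)$; indeed the failure of triviality at $n=1$ is what forces the hypothesis $n\ge 2$. The point to exploit is the rigid geometry of the classical Hopf surface: the branch points are the $n$-fold preimages of the four fixed thetas under the eigenvalue map, and $\pi_1(\mathcal A_n)$ is cyclic (generated by a loop around the resultant). I would show that this generator acts trivially on $H^1(C;\mathbb Z)$, using the fixed degree-$n$ projection $C\to T$ and the induced splitting of $H^1(C)$, thereby reducing the structure group of the torus bundle to translations and supplying the Leray--Hirsch hypothesis. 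This monodromy computation is the crux; once it is in place, (i) and hence (ii) follow formally.
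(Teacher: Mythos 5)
Your skeleton is the same as the paper's: the paper also realizes $\mathcal M_n^{\reg}(0)$ as a torus fibration over $\mathcal A_n$ via the graph map (a proper submersion, hence locally trivial, hence a Serre fibration), invokes Leray--Hirsch for (i), and deduces (ii) from affineness of the base plus Andreotti--Frankel, with the same arithmetic $(2n+1)+(4n-2)=6n-1$. But your execution contains a concrete error: you identify $\mathcal A_n$ with the complement of the single resultant hypersurface in $|\mathcal O_{\mathbb P^1\times \mathbb P^1}(n,1)|\cong \mathbb P^{2n+1}$, i.e.\ with the set $\mathcal B_n$ of all \emph{smooth graphs}. That set is $G(\mathcal M_n(0))$, not $G(\mathcal M_n^{\reg}(0))$: a smooth graph tangent to one of the four lines $D_i$ is the graph of an \emph{irregular} bundle, and its spectral curve is nodal, not smooth (Theorem \ref{aa1}); conflating ``smooth graph'' with ``smooth spectral curve'' erases exactly the regular/irregular dichotomy the paper is about. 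The correct base is $\mathcal A_n=\mathcal B_n\setminus (H_1\cup H_2\cup H_3\cup H_4)$, with $H_i$ as in Notation \ref{defH}. Part (ii) survives this correction, since the complement of the (now reducible) hypersurface $\mathcal C_n\cup \tilde{H}_1\cup\cdots\cup\tilde{H}_4$ is still smooth affine of dimension $2n+1$; but your route to part (i) does not.

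The reason is that the crux you single out --- triviality of the monodromy of $R^1\phi_*\mathbb Z$ --- cannot be established the way you propose. With the correct $\mathcal A_n$, the claim that $\pi_1(\mathcal A_n)$ is ``cyclic, generated by a loop around the resultant'' is unfounded: $\pi_1(\mathcal A_n)$ also contains meridians around each tangency hypersurface $\tilde{H}_i$. Around such a meridian the spectral curve undergoes a nodal degeneration (two of the $4n$ branch points of $C_E\to G(E)\cong \mathbb P^1$ collide), so by Picard--Lefschetz the monodromy acts on $H^1(C_E,\mathbb Z)$ by a transvection along the vanishing cycle; that cycle is non-separating (the geometric genus drops from $2n-1$ to $2n-2$), hence nonzero in homology, so the transvection is nontrivial. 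The rank-$2$ sublattice pulled back under the fixed degree-$n$ projection $C_E\to T^*$ is indeed monodromy-invariant, but for $n\ge 2$ it is far from all of $H^1(C_E,\mathbb Z)\cong \mathbb Z^{4n-2}$, so the ``induced splitting'' does not give triviality. (Your claim that the fibre classes lift via an Abel--Jacobi section of a universal spectral sheaf is likewise asserted, not proved.) For comparison: the paper's own proof checks the Serre-fibration property and then cites Leray--Hirsch \cite[17.8.1]{tdie} without exhibiting the global classes that theorem requires, so it too leaves this hypothesis unverified; your proposal is more candid about what is needed, but the monodromy computation you defer is precisely the missing step, and your plan for it rests on the misidentification of $\mathcal A_n$ and on a $\pi_1$ claim that fails for the true base.
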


\begin{theorem*}[\ref{ttt1}]
Let $Y\subset \mathcal M_n(0)$ be an irreducible compact complex analytic subspace. Then $G(Y)$ is a single point.
\end{theorem*}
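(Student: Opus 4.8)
The plan is to show that $G\colon \mathcal M_n(0)\to |\mathcal O_{\mathbb P^1\times\mathbb P^1}(n,1)|$ is constant on $Y$; since $Y$ is irreducible this forces $G(Y)$ to be a single point. Write $\mathbb P^N:=|\mathcal O_{\mathbb P^1\times\mathbb P^1}(n,1)|$ and let $h:=c_1(\mathcal O_{\mathbb P^N}(1))$ be the hyperplane class. As $Y$ is compact and $G$ is holomorphic, Remmert's proper mapping theorem gives that $G(Y)$ is a compact irreducible analytic subvariety of $\mathbb P^N$; it is a single point precisely when $\dim G(Y)=0$, equivalently when $\deg\bigl(G^*\mathcal O_{\mathbb P^N}(1)\big|_C\bigr)=\int_C G^*h=0$ for every compact curve $C\subset Y$. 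So I would reduce the whole statement to the vanishing of the pulled-back hyperplane class on compact curves.

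The heart of the argument is the structure recorded in Theorem~\ref{full1bis}. Over the regular locus, $G$ exhibits $\mathcal M_n^{\reg}(0)$ as a fibration over a base $\mathcal A_n$, realized through $G$ as an open subset of $\mathbb P^N$, with fibers the real tori $(S^1)^{4n-2}$. I would first argue that $\mathcal A_n$ is Stein: by the K\"unneth decomposition of Theorem~\ref{full1bis} together with the vanishing $H^i(\mathcal M_n^{\reg}(0),\mathbb Z)=0$ for $i\ge 6n$, the base $\mathcal A_n$ has the cohomology of a Stein manifold of its complex dimension, and its concrete description as a locus of graphs in $\mathbb P^N$ should exhibit it as the complement of a hypersurface. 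Two consequences follow. First, since the torus fibers are totally real, $\mathcal M_n^{\reg}(0)$ carries no positive-dimensional compact complex-analytic subvariety: a compact curve would map under $G$ to a compact, hence (by Steinness of $\mathcal A_n$) single, point, and thus lie inside one totally real fiber, forcing it to be a point. Second, and what I will actually use, $\Pic(\mathcal A_n)\cong H^2(\mathcal A_n,\mathbb Z)$ by Cartan's Theorem~B, and the hyperplane class restricts to a torsion class on $\mathcal A_n$; hence $G^*h$ is rationally trivial on $\mathcal M_n^{\reg}(0)$, so $\int_C G^*h=0$ for every compact curve $C\subset \mathcal M_n^{\reg}(0)$.

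With this in hand the conclusion runs as follows. If $Y\subset\mathcal M_n^{\reg}(0)$ there is nothing to prove, since $Y$ would then be a point. Otherwise $Y$ meets the irregular hypersurface $\mathcal M_n^{\irreg}(0)$; when $Y\not\subset\mathcal M_n^{\irreg}(0)$ the open set $Y\cap\mathcal M_n^{\reg}(0)$ is dense in $Y$, every compact curve $C\subset Y$ meets it in a dense open subset, and continuity of $G^*h$ together with the vanishing on the regular part gives $\int_C G^*h=0$; thus $G|_C$ is constant for all such $C$, and irreducibility of $Y$ yields that $G|_Y$ is constant. The remaining case $Y\subset\mathcal M_n^{\irreg}(0)$ I would handle by the analogous fibration structure of the irregular hypersurface, where (Theorem~\ref{aa1}(b)) the spectral curve $C_E$ is singular but $G(E)$ still varies only in the Stein base. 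I expect the main obstacle to be exactly this passage across $\mathcal M_n^{\irreg}(0)$: one must control $G$ and the class $G^*h$ where the spectral curves degenerate, and verify that adjoining the singular graphs to $\mathcal A_n$ does not create compact positive-dimensional subvarieties in the image. Pinning down the precise realization of $\mathcal A_n$ as the complement of a hypersurface in $\mathbb P^N$, and hence its Steinness, is the technical core on which the whole proof rests.
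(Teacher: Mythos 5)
Your key idea --- that a compact analytic subvariety must be killed by a holomorphic map to a Stein/affine space of graphs --- is the same germ as the paper's argument, but you aim it at the wrong base, and that is exactly where your proof breaks. Because you work with $\mathcal A_n = G(\mathcal M_n^{\reg}(0))$, you are forced to treat separately the compact curves that meet or lie inside $\mathcal M_n^{\irreg}(0)$. The ``continuity'' step for a curve $C\not\subset\mathcal M_n^{\irreg}(0)$ does not work: the restriction of $G^*h$ to the dense open subset $C\cap\mathcal M_n^{\reg}(0)$ lives in $H^2$ of a \emph{non-compact} curve, which vanishes identically, so knowing that $G^*h$ is torsion on $\mathcal M_n^{\reg}(0)$ gives no control on the integer $\int_C G^*h$; the positive degree can in principle be ``concentrated'' at the finitely many points of $C\cap\mathcal M_n^{\irreg}(0)$. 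And the remaining case $Y\subset\mathcal M_n^{\irreg}(0)$ you explicitly leave open, calling it the main obstacle. So the proposal as written has a genuine gap --- one you honestly flagged, but a gap nonetheless. (A secondary point: Steinness cannot be deduced from a space merely \emph{having the cohomology} of a Stein manifold; that implication is backwards.)

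The gap disappears once you notice that irregularity has nothing to do with smoothness of the graph: an irregular bundle without jumps still has a graph that is smooth and irreducible, only \emph{tangent} to one of the four lines $D_i$ (tangency, not singularity of $G(E)$, is the irregularity condition). Hence $G(\mathcal M_n(0))$ --- regular and irregular bundles alike --- lands in $\mathcal B_n = |\mathcal O_{\mathbb P^1\times\mathbb P^1}(n,1)|\setminus\mathcal C_n$, the complement of the discriminant hypersurface $\mathcal C_n$, which is irreducible and hence cut out by a single equation, so that $\mathcal B_n$ is an affine variety; this is precisely how the paper sets things up before Theorem \ref{full1bis}. The paper's entire proof of Theorem \ref{ttt1} is then two lines: $G(Y)$ is an irreducible compact complex analytic subspace of the affine variety $\mathcal B_n$, and an affine variety contains no positive-dimensional compact analytic subvarieties (e.g.\ by the maximum principle applied to coordinate functions), so $G(Y)$ is a point. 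No hyperplane-class computation, no case division along $\mathcal M_n^{\reg}(0)$ versus $\mathcal M_n^{\irreg}(0)$, and no discussion of $\mathcal A_n$ is needed: the right base to make your idea work is $\mathcal B_n$, not $\mathcal A_n$.
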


\begin{corollary*}[\ref{ttt2}]
Let $Y\subset \mathcal M_n^{\reg}(0)$ be an irreducible compact complex analytic subspace. Then $Y$ is contained in one of the $(2n-1)$-dimensional tori
which are the fibers of the graph map.
\end{corollary*}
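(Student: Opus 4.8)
The plan is to obtain the corollary as an immediate consequence of Theorem~\ref{ttt1} combined with the torus-fibration description of the graph map over the regular locus. The starting observation is purely set-theoretic: since $\mathcal M_n^{\reg}(0)\subset \mathcal M_n(0)$, any irreducible compact complex analytic subspace $Y\subset \mathcal M_n^{\reg}(0)$ is in particular an irreducible compact complex analytic subspace of $\mathcal M_n(0)$. Theorem~\ref{ttt1} therefore applies verbatim and yields that $G(Y)$ is a single point $p\in|\mathcal O_{\mathbb P^1\times\mathbb P^1}(n,1)|$; equivalently, $Y\subseteq G^{-1}(p)$.

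It remains only to identify the fiber $G^{-1}(p)$. Every $E\in Y$ is a regular bundle without jumps, so $p=G(E)$ is the graph of a regular bundle and, by Theorem~\ref{aa1}(a), the associated spectral curve is smooth and connected. Over such a point the fiber of $G$ is the space of admissible gluing data for $E$ along the elliptic fibers of $\pi$, which carries the structure of a complex torus; its dimension is recorded by the Künneth factor $H^*((S^1)^{4n-2},\mathbb Z)$ appearing in Theorem~\ref{full1bis}(i). Since a real $(4n-2)$-torus is a complex torus of dimension $2n-1$, and these are precisely the fibers of $G$ restricted to $\mathcal M_n^{\reg}(0)$, the containment $Y\subseteq G^{-1}(p)$ places $Y$ inside one of the $(2n-1)$-dimensional tori, as asserted.

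The one point that genuinely requires the regularity hypothesis --- and which is already established in the analysis leading to Theorem~\ref{full1bis} --- is that over the regular locus $G$ is a locally trivial torus fibration, so that $G^{-1}(p)$ is an honest smooth $(2n-1)$-torus rather than a degenerate fiber. This is exactly where one uses $Y\subset\mathcal M_n^{\reg}(0)$ rather than merely $Y\subset\mathcal M_n(0)$: by Theorem~\ref{aa1}(b) the spectral curve of an irregular bundle is singular, so the corresponding fiber degenerates and the sharpened conclusion of the corollary is meaningful only over the regular part. Granting this fibration description, no further argument is needed; the only point to check is that $p$ lies in the regular stratum of the image, but this is immediate since $p=G(E)$ with $E\in Y\subset\mathcal M_n^{\reg}(0)$.
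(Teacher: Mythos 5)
Your proposal is correct and follows essentially the same route as the paper: apply Theorem~\ref{ttt1} to conclude $G(Y)$ is a single point, then identify the fiber over that point (which lies in the regular stratum since $Y\subset\mathcal M_n^{\reg}(0)$ and regularity is determined by the graph) as a compact complex torus of dimension $2n-1$. The paper's fiber identification comes most directly from Lemma~\ref{aa3} (the fiber is the Jacobian of the smooth spectral curve), which is a cleaner citation than your appeal to the fibration description behind Theorem~\ref{full1bis}, but the argument is the same.
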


One might also consider the moduli spaces $\mathcal M_{n, \delta}$ of bundles on $X$ having 
determinant $\delta$ in place of trivial determinant. 
We conclude the paper by showing (Prop.\thinspace \ref{wpre2}) that $\mathcal M_{n, \delta}$ and $\mathcal M_n$ are biholomorphic, 
therefore all results proved here have analogous statements for the cases of nontrivial determinant. 

Our interest in bundles on Hopf surfaces originated from the survey of E. Witten 
about instantons on $S^3 \times S^1$
\cite{W}.

\begin{remark}
The real 4-manifold $S^3 \times S^1$ admits the structure of an elliptically fibered
 complex surface diffeomorphic to  a classical Hopf surface, 
which we denote by $X$ in this work. 
Using \cite[Thm.\thinspace 1]{Bu}, the moduli space of $SU(2)$ instantons on $S^3 \times S^1$
of charge $n$ can be identified with the moduli space of stable $SL(2,\mathbb C)$ bundles on $X$ with $c_2=n$.
 Here we denote by $\mathcal M_n$  the moduli space of  stable rank 2 vector bundles on 
 a classical Hopf surface $X$
 with $c_2=n$ and trivial determinant. 
  In the literature about the gauge theory of instantons on $S^4$ and  on ruled surfaces, 
  strong homological stability 
  of moduli spaces were proven using the description of jumping lines \cite{BHMM, HM}
  and of local contributions to instanton charges concentrated at exceptional curves \cite{Ga}.
It is now know
  whether the Atiyah--Jones conjecture \cite{AJ} holds true over $S^3\times S^1$.
  For the case of instantons on $S^3\times S^1$, the corresponding complex geometry
  presents a  completely new phenomenon.
 Namely, one  finds out that 
  the second Chern classes of vector bundles on $X$ can not be calculates via  the standard
  { jumping fibers} techniques used for rational surfaces. The reason is that 
   for all $n>0$, the moduli space $\mathcal M_n$ 
  of $SL(2,\mathbb C)$  holomorphic bundles 
  on $X$ contains a dense open (with analytic complement) subset $\mathcal M_n(0)$  formed by bundles without any jumps.
  \end{remark}
 
  Here we concentrate in discussing geometric aspects of the 
   set $\mathcal M_n(0)$ of bundles without jumps, by dividing it into the subsets of regular 
   and irregular bundles.  
  The second author discusses connectedness of 
  $\mathcal M_n$ in \cite{Ga2}, but we do not use that result here.

\section{Bundles without jumps}
Let $\pi\colon X\to \mathbb P^1$ be a classical Hopf surface  with $T$ as its elliptic fiber. 
We have $\Pic^0(T)$ (often written as $T^\ast$) which is non-canonically isomorphic to $T$.
Observe that, since $T$ is a torus, there are $4$ such elements $L\in \mathrm{Pic}^0(T) $ 
satisfying $L^{\otimes 2} \cong \mathcal O_T$, these are called half-periods. 
Each of them is informally referred to as a \lq\lq theta\rq\rq (as in theta characteristic), and the expression 
\lq\lq there are $4$ thetas\rq\rq \,  is commonly used. 
Hence, the 4 thetas of $T$ are the 4 elements $L$ of $\mathrm{Pic}^0(T)$  such that $L^{\otimes 2}\cong \mathcal O_T$. 
Informally, thinking of the torus as obtained from $\mathbb C^2/\Lambda$ where $\Lambda$ is the 
integer lattice, these can be though of as 
corresponding to the points  $(0,0), (\frac{1}{2},0), (0, \frac{1}{2}), (\frac{1}{2},\frac{1}{2})$.\\

Let $F$ be a rank $2$ vector bundle on $T$
with $\det(F)\cong \mathcal O_T$. If $F$ is not semistable, then $F\cong R\oplus R^\vee$ for some line bundle $R$ on $T$ with $\deg(R) \ne 0$.
Now assume that $F$ is semistable and decomposable, i.e. assume $F\cong L\oplus L^\vee$ for some degree $0$ line bundle on $T$, then there are 2 possibilities:
\begin{itemize}
\item If $L\ne L^\ast$, then $\dim \End(F) =2$. 
\item If $L\cong L^\ast$, then $\dim \End(F)=4$.
\end{itemize}

\begin{notation}\label{irregdef} 
A  bundle $E\in \mathcal M_n$  is called {\bf regular} it its restriction to every fiber of $\pi$ has 
automorphism group of minimal dimension, i.e. $\dim \mbox{Aut}(E\vert_{T}) =  2\, \forall T$.
Note that, for a fiber $T$, we have $L=L^\ast$ if and only if $L^{\oplus 2}= \mathcal O_T$, i.e. if and only if $L$ is a theta;
 in which case the bundle is called {\bf irregular} over $T$. 
 A bundle $E$ over $X$ is called {\bf irregular} if its restriction to any fiber of $\pi$ is irregular. \\
\end{notation}

Let $\mathcal M_n$ denote the moduli space
of all rank $2$ stable vector bundles on $X$ with trivial determinant and $c_2(E)=n$, i.e.
$$\mathcal M_n=\{E\,  \text{stable  bundle on} \, X,   \rk(E)=2, \det(E)\cong \mathcal O_X, c_2(E)=n\}.$$

The variety $\mathcal M_n$ is  smooth non-empty    of dimension 
$4n$ \cite[Prop.\thinspace 3.4.4]{BH}, see also 
\cite[Prop.\thinspace 4.2]{BMo} for the case of nontrivial determinant.
Since here we consider only the case of trivial determinant,  the 
{\bf spectral involution} on $T^*$ may be defined 
as $i(\lambda) = -\lambda$, with corresponding {\bf double cover} 
\begin{equation}\label{dc}\mathbb P^1 \times T^* \to \mathbb P^1 \times \mathbb P^1\end{equation} 
$$(b, \lambda) \stackrel{\sigma}{\mapsto} (b, \{\lambda, i(\lambda)\}).$$

Given a rank 2 bundle $E$ on $X$, let $V$ be the universal Poincaré line bundle on
$X \times \mathbb C^*$ and consider the first derived image $R^1\pi'_*(E \otimes V)$ where
$\pi'$  the projection $\pi'= \pi\times id \colon X\times \mathbb C^* \to \mathbb P^1 \times \mathbb C^*$. 
The sheaf $R^1\pi'_*(E \otimes V)$ is supported on a divisor on $ \mathbb P^1\times \mathbb C^*$, 
which descends to a divisor 
$$D \subset \mathbb P^1\times \mathbb P^1 = \pi(X) \times \Pic^0(T)/\pm1.$$ When $c_2(E)=n$, the divisor
 $G(E) \ce D $ belongs to the linear system $|\mathcal O(n,1)|$ over $\mathbb P^1\times \mathbb P^1$
 and is called the {\bf graph} of $E$, see\cite[Prop.\thinspace 3.2.3]{BH}. \\

Let  $\mathcal M_n(0)$ denote the set of all $E\in  \mathcal M_n$ with {\bf no jumps}, i.e.
$$\mathcal M_n(0) \ce  \{E\in  \mathcal M_n,  \forall T_x \, E\vert_{T_x}\, \text{ is semistable}\}.$$

 Let  $\mathcal M_n^{\reg}(0)$ denote the subset of all {\bf regular bundles} in $\mathcal M_n(0)$, that is, 
 those whose restrictions to fibers are semistable but not sums of  half-periods, i.e.
 $$\mathcal M_n^{\reg}(0)= \{E\in  \mathcal M_n: \forall   T_x, 
  E\vert_{T_x}   \not\cong  \, L_0\oplus i^\ast(L_0) \, \text{with}\, L_0^{\otimes 2} \cong \mathcal O_{T_x} \}.$$

\begin{remark}The image $G(\mathcal M_n(0))$ of $\mathcal M_n(0)$ is the Zariski open set of all graphs $D\in |\mathcal O_{\mathbb P^1\times \mathbb P^1}(n,1)|$ which are smooth (or, equivalently in this case, irreducible), i.e. the set of all $D\in |\mathcal O_{\mathbb P^1\times \mathbb P^1}(n,1)|$ 
giving a degree $n$ morphism $\mathbb P^1\to \mathbb P^1$.
\end{remark}

Since $\mathcal M_{\delta,1}$ is well-understood, from now on we consider $n>1$.

Assume $n>1$ and set $$\mathcal M_n^{\irreg}(0)\ce \mathcal M_n(0)\setminus \mathcal M^{\reg}_{\delta,n}(0).$$ 
Since the notions of irregularity and of having no jumps are completely 
 described in terms of graphs, we have the equalities
$$ \mathcal M_n(0) =G^{-1}(G(\mathcal M_n(0))), \quad \quad 
 \mathcal M_n^{\reg}(0) =G^{-1}(G(\mathcal M_n^{\reg}(0))),  $$and
$$ \mathcal M_n^{\irreg}(0) =G^{-1}(G(\mathcal M_n^{\irreg}(0))).$$

\begin{remark}\label{rr1}
The set $G(\mathcal M_n\setminus \mathcal M_n(0))$ is an irreducible quasi-projective variety of dimension $2n$, i.e. of codimension $1$ in $\mathbb P^{2n+1}$. It consists of singular graphs.
\end{remark}

\begin{remark}
 The set $G(\mathcal M_n^{\irreg}(0))$ is a quasi-projective variety  and each irreducible component of it has dimension $2n$, i.e. it has codimension $1$ in $\mathbb P^{2n+1}$. Since $G$ is surjective and since there are $4$ thetas, we get $4$ different \lq\lq types\rq\rq of irregular bundles, one for each of the $4$ thetas.
\end{remark}

For each $A\in G(\mathcal M_n^{\reg}(0))$ the fiber $G^{-1}(A)$ is well-understood, it is isomorphic to the Jacobian of the spectral curve of $A$ and in particular
it is a smooth and connected projective manifold of dimension $2n-1$.  

Recall that for $n>1$ 
the graph map $G\colon\mathcal M_n\to \mathbb P^{2n+1}$ is surjective, that $\mathbb U:= G(\mathcal M_n^{\reg}(0))$  is a Zariski open subset of $\mathbb P^{2n+1}$ and that for each $a\in \mathbb U$ the set $G^{-1}(a)$ is isomorphic to an Abelian variety of dimension $2n-1$.

\section{Regular v irregular}\label{irregVreg}

Let $\mathcal M_n(0)$ be the set of all $E\in \mathcal M_n$ with no jumps, then all elements 
$E\in \mathcal M_n(0)$ such that $G(E)$ is a smooth element of $|\mathcal O_{\mathbb P^1\times \mathbb P^1}(n,1)|$.
Set $$\mathcal M_n(\ge 1):= \mathcal M_n\setminus \mathcal M_n(0).$$ 
Since being singular is a closed analytic condition, $\mathcal M_n(\ge 1)$ is a closed analytic subset of $\mathcal M_n$, while $\mathcal M_n(0)$ is an open subset of $\mathcal M_n$. It is easy to check  that 
$\mathcal M_n(\ge 1)$ is the set of all $E\in \mathcal M_n$ with at least one jump.

Recall from notation \ref{nota} that  $\mathcal M_n^{sg}$ is the set of all $E\in \mathcal M_n(0)$ such that 
the spectral curve $C_E$ is singular, while $\mathcal M_n^{sm}$ is the set of all $E\in \mathcal M_n(0)$ 
such that the spectral curve $C_E$ of $E$ is smooth.

Note that $\mathcal M_n(0)$ is set of all $E\in \mathcal M_n$ such that $G(E)$ has no vertical component, i.e. it is an irreducible element of $|\mathcal O_{\mathbb P^1\times \mathbb P^1}(n,1)|$. We have used the notation 
$|\mathcal O(n,1)|_{sm}$ for  the set of such irreducible elements. Observe that  in both cases  
 $G(\mathcal M_n^{\reg}(0)) \subset |\mathcal O(n,1)|_{sm}$ and 
$G(\mathcal M_n^{\irreg}(0)) \subset |\mathcal O(n,1)|_{sm}$.

For any $E\in \mathcal M_n(0)$ let $C_E$ denote its spectral curve.
Let $i$ denote the involution $L\mapsto L^*$ on $T^*$. We have $T^*/i\cong \mathbb P^1$ and the quotient map $T\to \mathbb P^1$ is ramified over $4$ points, $a_1,a_2,a_3,a_4$, with are the images of of the $4$ thetas of $T^*$. We get $4$ element $\mathbb P^1\times \{a_i\}\in |\mathcal O_{\mathbb P^1\times \mathbb P^1}(0,1)|$, $i=1,2,3,4$.

Observe that  $\mathcal M_n^{\reg}(0)$ is  the set of all $E\in \mathcal M_n(0)$ such that $G(E)$ is transversal to each $\mathbb P^1\times \{a_i\}$, for $i=1,2,3,4$, i.e. if $E\in \mathcal M_n^{\reg}(0)$ then
$G(E)$ intersects $\Sigma =\mathbb P^1\times \{a_1,a_2,a_3,a_4\}$ at points. 
 Observe also that $\mathcal M_n^{\irreg}(0)= \mathcal M_n(0)\setminus \mathcal M_n^{\reg}(0)$, and that
all that elements of $\mathcal M_n^{\irreg}(0)$ have irreducible graphs.

\begin{theorem}\label{aa1}
Fix $E\in \mathcal M_n(0)$, $n\ge 2$. 
\begin{itemize}
\item[(a)] If $E\in \mathcal M_n^{\reg}(0)$, then $C_E$ is smooth and connected.
\item[(b)] If $E\in \mathcal M_n^{\irreg}(0)$, then $C_E$ is singular and irreducible.
\end{itemize}
\end{theorem}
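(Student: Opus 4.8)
The plan is to realize the spectral curve as a double cover of the graph and to read off all four assertions from the ramification of a single degree $n$ map. Since $G(E)$ is a smooth curve of class $(n,1)$ on $\mathbb P^1\times\mathbb P^1$, adjunction gives $G(E)\cong \mathbb P^1$: the first projection is an isomorphism, and the second projection is a degree $n$ morphism $\phi\colon G(E)\cong\mathbb P^1\to\mathbb P^1$. The double cover $\sigma$ of \eqref{dc} is $\mathrm{id}\times q$, where $q\colon T^\ast\to T^\ast/i\cong\mathbb P^1$ is the quotient by the spectral involution, ramified exactly over the four points $a_1,a_2,a_3,a_4$ (the images of the thetas). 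Hence $C_E=\sigma^{-1}(G(E))\to G(E)\cong\mathbb P^1$ is the double cover obtained by pulling back $q$ along $\phi$, branched over the divisor $B\ce \phi^{-1}(\{a_1,a_2,a_3,a_4\})=G(E)\cap\Sigma$ of degree $4n$. In this language $E\in\mathcal M_n^{\reg}(0)$ precisely when $\phi$ is unramified over every $a_i$ (transversality to each $\mathbb P^1\times\{a_i\}$), and $E\in\mathcal M_n^{\irreg}(0)$ precisely when $\phi$ is ramified over at least one $a_i$.

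First I would settle smoothness and singularity by a local computation. Write $q$ as $y^2=\eta$ with $\eta$ vanishing simply at $a_1,\dots,a_4$, so the pulled-back cover is locally $y^2=\phi^\ast\eta$. Near a point $p$ over $a_i$ with ramification index $e_p$, a local coordinate $t$ at $p$ gives $\phi^\ast\eta=t^{e_p}\,u$ with $u$ a unit, so $C_E$ has local equation $y^2=t^{e_p}u$. For $e_p=1$ this is smooth, while for $e_p\ge 2$ it is an $A_{e_p-1}$ singularity; away from $B$ the cover is \'etale, hence smooth since $G(E)$ is. Consequently, in case (a) every $e_p=1$ and $C_E$ is smooth, whereas in case (b) some $e_p\ge 2$ and $C_E$ is singular.

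It remains to prove irreducibility, which I expect to be the crux. The cover $y^2=\phi^\ast\eta$ splits into two components exactly when $\phi^\ast\eta$ is a square, i.e.\ when $B=2D$ for an effective divisor $D$; since distinct $a_i$ have distinct second coordinate, this is equivalent to every fiber $\phi^{-1}(a_i)$ having all ramification indices even. I would rule this out by Riemann--Hurwitz: for $\phi\colon\mathbb P^1\to\mathbb P^1$ of degree $n$ the total ramification is $2n-2$. If over each $a_i$ all indices were even, then writing $k_i$ for the number of points over $a_i$ one has $k_i\le n/2$ (each index is $\ge 2$ and they sum to $n$) and the ramification over $a_i$ equals $n-k_i$; summing over the four points gives total ramification at least $4n-\sum_i k_i\ge 4n-2n=2n>2n-2$, a contradiction. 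Hence $B$ carries a point of odd multiplicity, so $\phi^\ast\eta$ is not a square and $C_E$ is irreducible. Note this holds for every $E\in\mathcal M_n(0)$, regardless of regularity.

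Combining the three steps finishes both parts: in (a), $C_E$ is smooth and irreducible, hence smooth and connected; in (b), $C_E$ is irreducible and singular. The only genuinely delicate point is the irreducibility, since a singular spectral curve could a priori break into two rational components; it is precisely the Riemann--Hurwitz count—showing one cannot have purely even ramification over all four branch points of the elliptic double cover $q$—that forbids this.
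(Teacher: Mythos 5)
Your proof is correct, and it shares the paper's basic setup---realizing $C_E$ as the pullback along $\phi=\pi_{2|G(E)}$ of the universal double cover $\mathbb P^1\times T^*\to \mathbb P^1\times \mathbb P^1$ branched over $\Sigma$, with $G(E)\cong\mathbb P^1$---but both key steps are carried out by genuinely different mechanisms. For smoothness versus singularity, the paper argues by a genus count: $C_E$ is an effective divisor of arithmetic genus $2n-1$ in $\mathbb P^1\times T^*$, and in the irregular case the $2$-to-$1$ map onto $G(E)$ has at most $4n-1$ ramification points, which is incompatible with smoothness (a smooth double cover of $\mathbb P^1$ with at most $4n-2$ branch points has genus at most $2n-2$); your local model $y^2=t^{e_p}u$ instead exhibits an explicit $A_{e_p-1}$ singularity at each non-transverse intersection point, which is more self-contained and also recovers the singularity-type information the paper only records later (Remark \ref{Woo1}). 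For irreducibility, the paper's argument is geometric and uses the ambient surface: two components would each map isomorphically onto $G(E)\cong \mathbb P^1$, hence be rational curves in $\mathbb P^1\times T^*$, hence horizontal lines $\mathbb P^1\times\{o_i\}$ (there is no nonconstant map $\mathbb P^1\to T^*$), contradicting that they cover a curve of bidegree $(n,1)$; your argument is intrinsic to the covering data---reducibility forces the branch divisor $B$ to be even, which your Riemann--Hurwitz count (at least $2n$ ramification against the available $2n-2$) rules out. Both routes are sound; the paper's irreducibility step is shorter, while yours proves irreducibility uniformly for every $E\in \mathcal M_n(0)$ and thereby also yields connectedness in part (a), where the paper instead invokes the (equally standard) fact that a double cover with nonempty branch locus is connected.
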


\begin{proof}
By assumption $G(E)\cong \mathbb P^1$. By definition $u: C_E\to G(E)$ is a double covering ramified exactly 
at the $4n$  points of $H_i\cap G(E)$ with $i=1,2,3,4$.
Hence $C_E\setminus u^{-1}(G(E))$ is smooth and it is a 2 to 1 unramified covering. 
This covering comes from a covering
 $u'\colon \mathbb P^1\times T^*\to \mathbb P^1\times T^*/i$ 
 which is ramified exactly over $\Sigma$. We get a universal double covering over all smooth curves. 
 We get that each $C_E$ is a certain effective divisor of $\mathbb P^1\times T^*$ with arithmetic genus $2n-1$. Hence $C_E$ is smooth if and only if it is over its points which are mapped bijectively onto the points of $G(E)\cap \Sigma$ and this is the case if and only if $E\in \mathcal M_n^{\reg}(0)$.
Since $G(E)\cap \Sigma$ we get that $C_E$ is connected. 
Hence we get (a).

For part (b), assume that $C_E$ is not irreducible. Since $G(E)\cong \mathbb{P}^1$ and $C_E\to G(E)$ is a degree $2$ morphism, $C_E$ has $2$ irreducible components, both of them smooth and isomorphic to $\mathbb{P}^1$, say $C_E =J_1\cup J_2$. Remember that $C_E\subset \mathbb{P}^1\times T^*$. Hence, $\pi_2(J_i)$ is a single point.
Hence $G(E)$, which is the quotient of $C_E$ by the involution, is not an element of 
$|\mathcal{O}_{\mathbb{P}^1\times \mathbb{P}^1}(n,1)|$, a contradiction.
Since $C_E$ is irreducible  of arithmetic genus $2n-1$
and the 2-to 1 map $C_E \to G(E)$ has at most $4n-1$ ramification points, $C_E$ is singular.
\end{proof}

\begin{lemma}\label{aa2}
If $G(E)\in |\mathcal O(n,1)|_{sm}$, then $E$ is regular.
\end{lemma}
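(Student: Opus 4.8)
The plan is to detect regularity through the local geometry of the double cover $\sigma$ along its branch locus $\Sigma=\bigcup_{i=1}^{4}H_i$, where $H_i=\mathbb P^1\times\{a_i\}$, by studying the spectral curve $C_E=\sigma^{-1}(G(E))$; here the hypothesis $G(E)\in|\mathcal O(n,1)|_{sm}$ is read as the condition that this double cover $C_E$ is smooth. Recall from the discussion preceding Theorem \ref{aa1} that $E\in\mathcal M_n^{\reg}(0)$ if and only if $G(E)$ is transversal to each $H_i$; so it suffices to show that smoothness of $C_E$ forces this transversality. I would split the points of $C_E$ into those lying over $G(E)\setminus\Sigma$, where $\sigma$ is \'etale, and those lying over the $4n$ points of $G(E)\cap\Sigma$, where $\sigma$ ramifies.

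Over $G(E)\setminus\Sigma$ the map $\sigma$ is an unramified double cover, so $C_E\to G(E)$ is there a local biholomorphism; since $G(E)$ is a smooth rational curve, $C_E$ is automatically smooth away from $\sigma^{-1}(\Sigma)$ and imposes no constraint. The entire content is thus local at the ramification points.

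At such a point, lying over $p\in G(E)\cap H_i$, I would choose analytic coordinates $(s,t)$ on $\mathbb P^1\times\mathbb P^1$ with $H_i=\{t=0\}$, in which $\sigma$ takes the normal form $(s,y)\mapsto(s,y^2)=(s,t)$ of a double cover branched along $\{y=0\}$. Because $G(E)$ projects isomorphically onto the first factor it is locally a graph $t=\psi(s)$ with $\psi(0)=0$, and the spectral curve becomes $C_E=\{y^2=\psi(s)\}$. If $G(E)$ is transversal to $H_i$ at $p$, then $\psi'(0)\ne0$ and, after rescaling, $C_E=\{y^2=s\}$ is smooth; if $G(E)$ is tangent to order $m\ge2$, then $\psi(s)=c\,s^m+\cdots$ with $c\ne0$ and $C_E=\{y^2=s^m\}$ has an $A_{m-1}$ singularity. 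Hence the point of $C_E$ over $p$ is smooth if and only if $G(E)$ crosses $H_i$ transversally there.

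Putting the two regimes together, $C_E$ is smooth exactly when $G(E)$ is transversal to all four $H_i$, i.e.\ exactly when $E\in\mathcal M_n^{\reg}(0)$, which gives the lemma; equivalently, the statement is the contrapositive of Theorem \ref{aa1}(b). I expect the only delicate point to be the local singularity computation at the ramification points — verifying that every failure of transversality, of whatever tangency order, produces a genuine singularity of $C_E$ — together with the bookkeeping that matches the four branch lines $H_i$ with the four thetas, so that the transversality criterion reproduces precisely the defining condition of $\mathcal M_n^{\reg}(0)$.
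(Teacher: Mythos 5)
Your reading of the hypothesis (smoothness of the spectral curve $C_E$, equivalently transversality of $G(E)$ to the four branch lines) is the intended one, and your local model $y^2=\psi(s)$ correctly shows that $C_E$ is smooth over a point of $G(E)\cap\Sigma$ exactly when $G(E)$ crosses the branch line transversally there; the paper uses the same computation. But your argument is circular at its decisive step: you quote, from the discussion preceding Theorem \ref{aa1}, the statement that $\mathcal M_n^{\reg}(0)$ is exactly the set of $E\in\mathcal M_n(0)$ whose graph is transversal to each branch line, and that unproved ``observation'' is precisely (the nontrivial direction of) what Lemma \ref{aa2} exists to establish. By definition (Notation \ref{irregdef} and the definition of $\mathcal M_n^{\reg}(0)$ in Section 2), regularity is a condition on the isomorphism class of each restriction $E|_{T_x}$, namely $\dim\mathrm{Aut}(E|_{T_x})=2$, whereas the graph records only the S-equivalence class of $E|_{T_x}$. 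At a fiber $T_x$ whose graph point lies on $\Sigma$, both candidates --- the decomposable bundle $L_i\oplus L_i$ (automorphism group of dimension $4$) and the nonsplit self-extension of $L_i$ by $L_i$ (dimension $2$) --- are semistable with the same graph point, so no amount of geometry of curves in $\mathbb P^1\times\mathbb P^1$ and $\mathbb P^1\times T^*$ can decide which one occurs. For the same reason, appealing to ``the contrapositive of Theorem \ref{aa1}(b)'' does not help: the paper's proof of Theorem \ref{aa1} presupposes the same identification, and Lemma \ref{aa3} in turn invokes Lemma \ref{aa2} precisely to know that every bundle in a fiber $G^{-1}(A)$ over a transversal graph is regular. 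What you call bookkeeping in your last sentence is in fact the substance of the lemma.

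What is missing is a step in which the bundle $E$ itself, and not merely its graph, is used. The paper supplies it via the direct image: $R^1\pi'_*(E\otimes V)$ is a sheaf supported on $C_E$ whose fiber at the point $a'$ over $a=(x,a_i)$ is, by cohomology and base change, $H^1(T_x,E|_{T_x}\otimes L_i)$ --- of dimension $2$ in the decomposable case and $1$ in the indecomposable case. Transversality makes $C_E$ smooth at $a'$ (your computation); smoothness forces this rank-one sheaf to be locally free at $a'$ by \cite[Remark 2.8]{T}; hence the fiber dimension is $1$, the decomposable restriction is excluded, and $E$ is regular at $T_x$. Some argument of this kind, passing from smoothness of the spectral curve to the local structure of the direct image and through it to the restriction $E|_{T_x}$, is unavoidable, and it is absent from your proposal.
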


\begin{proof}
Take $x\in \mathbb P^1$ such that $E_{|\pi^{-1}(x)}$ is an element of $T^*/i$ associated to a theta, i.e. it corresponds to a point $a\in \Sigma\cap G(E)$. We need to prove that $E_{|\pi^{-1}(x)}$ is indecomposable, i.e. that $h^1(\pi^{-1}(x),E_{|\pi^{-1}(x)})=1$. The transversality of $G(E)$ and $\Sigma$ implies that $C_E$ is smooth at its points, $a'$, over $a$ and that the torsion sheaf  $R^1$ on $\mathbb P^1\times T$ is locally free of rank $1$ at the point $a'$ as an $\mathcal O_{C_E}$-sheaf \cite[Remark 2.8]{T}.
\end{proof}

The following lemma is well-known, we just state it  for 
completeness.

\begin{lemma}\label{aa3}
Consider  a graph $A$ such that $A =G(E)$ for some $E\in \mathcal M_n^{\reg}(0)$. Then $G^{-1}(A)$ is isomorphic to the Jacobian $J(C_E)$ of the spectral curve $C_E$
and hence it is connected, smooth, compact and isomorphic to an Abelian variety of dimension $2n-1$.
\end{lemma}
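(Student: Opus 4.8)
The plan is to identify $G^{-1}(A)$ with a fixed-degree component of the Picard scheme of the spectral curve $C_A:=C_E$ by means of the spectral (relative Fourier--Mukai) correspondence, and then to deduce the four asserted properties from the fact that $C_A$ is a smooth projective curve of genus $2n-1$.

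First I would pin down the geometry of $C_A$. By Theorem \ref{aa1}(a), since $E\in \mathcal M_n^{\reg}(0)$ the curve $C_A$ is smooth and connected; and since $u\colon C_A\to G(A)\cong \mathbb P^1$ is a double cover branched exactly at the $4n$ points of $G(A)\cap \Sigma$, Riemann--Hurwitz gives $2g(C_A)-2=2\bigl(2\cdot 0-2\bigr)+4n=4n-4$, so $g(C_A)=2n-1$. Consequently the Jacobian $J(C_A)=\Pic^0(C_A)$ is an Abelian variety of dimension $2n-1$; being an Abelian variety it is automatically connected, smooth and compact. Note that, because the base $G(A)$ is rational, the Prym variety of $u$ coincides with $J(C_A)$, so no full-Jacobian-versus-Prym subtlety arises here.

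Next I would set up the correspondence on points and then promote it to an isomorphism. In one direction, to $E'\in G^{-1}(A)$ one attaches the sheaf $R^1\pi'_*(E'\otimes V)$, which is supported on $C_A$; because $E'$ has no jumps and is regular, the argument used in Lemma \ref{aa2} (cf. \cite[Remark 2.8]{T}) shows this sheaf is locally free of rank $1$ as an $\mathcal O_{C_A}$-module, i.e. a line bundle of a degree $d$ independent of $E'$. In the reverse direction, given a line bundle $N\in \Pic^d(C_A)$, the pushforward of $N$ twisted by the Poincaré kernel along the spectral cover yields a rank-$2$ bundle on $X$ with $c_2=n$, trivial determinant and graph $A$. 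I would then check that these two assignments are mutually inverse and holomorphic, obtaining $G^{-1}(A)\cong \Pic^d(C_A)$; this is exactly the Beauville--Narasimhan--Ramanan spectral construction carried out relatively over the elliptic fibration $X$ (cf. \cite{BH}).

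Finally, $\Pic^d(C_A)$ is a torsor under $J(C_A)=\Pic^0(C_A)$, so after fixing any base point it is isomorphic to $J(C_A)$; all of connectedness, smoothness, compactness and the identification as an Abelian variety of dimension $2n-1$ are then inherited from $J(C_A)$. The main obstacle is the promotion from a bijection of points to an isomorphism of analytic spaces: one must verify, via flatness and cohomology and base change for $R^1\pi'_*$, that the forward transform sends the universal family over $G^{-1}(A)$ to a family of degree-$d$ line bundles on $C_A$, and that the inverse pushforward preserves the constraints $\det=\mathcal O_X$, $c_2=n$ and the no-jumps condition. Since the lemma is well known and recorded only for completeness, I would discharge these verifications by citing \cite{BH} and \cite{T} rather than reproving them.
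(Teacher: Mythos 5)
Your proposal is correct and follows essentially the same route as the paper: both establish via Theorem \ref{aa1} and Lemma \ref{aa2} that $C_E$ is a smooth connected curve of genus $2n-1$ whose Jacobian is the asserted Abelian variety, both observe that the rational base (trivial Jacobian of $\mathbb P^1$) removes any Prym-versus-Jacobian subtlety, and both discharge the actual identification $G^{-1}(A)\cong J(C_E)$ by citation — the paper quotes \cite[Th. 5.14]{FMW}, while you sketch the relative spectral correspondence and defer the verifications to \cite{BH} and \cite{T}. Your explicit Riemann--Hurwitz computation and the $\Pic^d$-torsor remark are harmless elaborations of what the paper leaves implicit.
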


\begin{proof}
By Lemma \ref{aa1} $C_E$ is a smooth and connected curve of genus $2n-1$. Hence its Jacobian $J(C_E)$ is connected, smooth, compact and isomorphic to an Abelian variety of dimension $2n-1$. Lemma \ref{aa2} says that any bundle with $A$ as its graph is regular.
In our case the base, $B$, of the Hopf fibration is $\mathbb P^1$. Since $J(\mathbb P^1)$ is a singleton, it is sufficient to quote \cite[Th. 5.14]{FMW}.
\end{proof}


\section{Irregular profiles}
We fix $\delta\in \mathrm{Pic}(X) =\mathrm{Pic}^0(X)\cong \mathbb C^\ast$. Let $I\subset T^\ast$ be the set of all thetas with respect to the involution $i_\delta$.
We we fix an order for the $4$ elements of $I$,  we write them as $L_1,L_2,L_3,L_4$, and write $a_1, a_2, a_3, a_4$ 
for the $4$ elements of $\mathbb P^1$ associated to them, see \eqref{dc}.
Let $\pi_2\colon \mathbb P^1\times \mathbb P^1\to \mathbb P^1$ denote the projection onto the second factor. 
Set 
$$D_i\ce \pi_2^{-1}(a_i).$$ We get $4$ elements of $|\mathcal O_{\mathbb P^1}(1,0)|$. 

Let  $|\mathcal O_{\mathbb P^1\times \mathbb P^1}(n,1)|_{sm}$ denote the set of smooth elements in $|\mathcal O_{\mathbb P^1\times \mathbb P^1}(n,1)|$.
 Note that $E\in \mathcal M_n(0)$, i.e. $E$ has no jumps, if and only if $G(E)\in |\mathcal O_{\mathbb P^1\times \mathbb P^1}(n,1)|_{sm}$. For each $U\in |\mathcal O_{\mathbb P^1\times \mathbb P^1}(n,1)|_{sm}$ we get $4$ degree $n$ zero-dimensional schemes
 $$Z_U(i)\ce U\cap D_i, \quad  i=1,2,3,4.$$
  For any $E\in \mathcal M_n(0)$, taking $G(E)$ in place of $U$,  we get $4$ zero-dimensional schemes $Z_{G(E)}(i)$, $i=1,2,3,4$.
  
  \begin{remark}
Note that $E$ is irregular if and only if at least one of the 4 schemes $Z_{G(E)}(i)$, $i=1,2,3,4$, is not reduced, i.e., it is not formed by $n$ distinct points.
\end{remark}

\begin{notation}\label{defH} For each $i=1,2,3,4$, denote by  $H_i$  the set of all divisors 
$D\in |\mathcal O_{\mathbb P^1\times \mathbb P^1}(n,1)|_{sm}$ such that $D$ is tangent to $D_i$ 
(the point of tangency with $D_i$ is not fixed), and let $\mathcal H_i\ce G^{-1}(H_i)$.
\end{notation}

Fix $p\in D_i$ and let $(2p,D_i)$ denote the degree $2$ effective divisor of $D_i$ with $p$ as its reduction. Note that $(2p,D_i)$ is a degree $2$ connected zero-dimensional scheme. Since $n\ge 1$, $\mathcal O_{\mathbb P^1\times\mathbb P^1}(1,1)$ is very ample and $\deg(2p,D_i)=2$, we have $$h^0(\mathbb P^1\times \mathbb P^1,\mathcal I_{(2p,D_i)}(n,1))=
h^0(\mathbb P^1\times \mathbb P^1,\mathcal O_{\mathbb P^1\times \mathbb P^1}(n,1))-2$$
 and a general $D\in |\mathcal I_{(2p,D_i)}(n,1)|$ is smooth. 
Varying the point $p\in D_i$ we get  a non-empty hypersurface $\tilde{H}_i$ of $|\mathcal O_{\mathbb P^1\times \mathbb P^1}(n,1)|$ (see Theorem \ref{Wrr5} and its proof for more details, it implies for instance that $\mathcal H_i\ne \mathcal H_j$ if $i\ne j$).
Observe that $$H_i\ce \tilde{H}_i\cap |\mathcal O_{\mathbb P^1\times \mathbb P^1}(n,1)|_{sm},$$ that is, 
  $H_i$ is the set of all smooth elements of $\tilde{H}_i$ (these are the same $H_i$ from Notation \ref{defH}). Note that for all $n\ge 1$ the set $H_i$ is a Zariski open subset of $\tilde{H}_i$. 

Observe also  that for  $n>1$ and for $i=1,2,3,4$, we have that $\mathcal H_i$,  
the set of all $E\in \mathcal M_n^{\irreg}(0)$ such that $Z_{G(E)}(i)$ is not reduced. Fix $D\in |\mathcal O_{\mathbb P^1\times \mathbb P^1}(n,1)|$. For each $i\in \{1,2,3,4\}$
the scheme $D\cap D_i$ is a zero-dimensional scheme of degree $n$. 

\begin{definition}\label{weights}
Let $m_1\ge \cdots \ge m_x>0$ be the degrees  of the connected components of $D\cap D_i$
(in decreasing order).
\begin{itemize}
\item  We say that $(m_1,\dots ,m_x)$ is the {\bf $i$-th    profile} of $D$ and of all bundles $E$ with $G(E)=D$. 
 
\item  We say that the integer $\sum (m_i-1) =m_1+\cdots +m_x-x$ is the {\bf $i$-th weight} of $D$ and of all bundles $E$ with $G(E) =D$. 
 
\item  The {\bf length} $\ell$ of the profile $D\cap D_i$ and of all  bundles $E$ with $G(E)=D$
is the maximal integer $y$ such that $m_y\ge 2$, with the convention $\ell =0$ if $m_1=1$, i.e. if $D$ is transversal to $D_i$. 

\item The {\bf reduced $i$-th profile} of $D$
is the set of integers $m_1,\dots ,m_\ell$.
\end{itemize}
\end{definition}

 If $\ell>0$, these numbers form a non-decreasing sequence of integers $\ge 2$ with $\sum _i m_i\le n$. 
 Note that the $i$-th weight is uniquely determined by the reduced profile of $m_1,\dots ,m_s$: it is $0$ if $\ell =0$, while it is $-\ell+m_1+\cdots +m_\ell$ if $\ell>0$.
For instance, for $n=2$ the possible profiles are $2$ and $1,1$; while for $n=3$ the profiles are $3$, $2,1$ and $1,1,1$.\\

\begin{notation}For any such $E$ with no jump we denote by 
$$m_1,\dots ,m_{s(i)} \quad  \text{or} \quad  m_1(E),\dots ,m_{s(i)}(E)$$
 the multiplicities of the connected components of $Z_{G(E)}(i)$ in non-decreasing order. 
By definition, the $i$th-weight $w_i(E)$ of $E\in \mathcal M_n(0)$ is the integer 
 $$w_i(E)\ce \sum _{j=1}^{s(i)} (m_j-1),$$
 and the weight $w(E)$ of $E\in \mathcal M_n(0)$ is the integer $$w(E)\ce w_1(E)+w_2(E)+w_3(E)+w_4(E).$$
\end{notation}

\begin{theorem}\label{irregw} $E\in \mathcal M_n(0)$ is irregular if and only if $w(E)>0$.
\end{theorem}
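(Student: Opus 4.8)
The plan is to separate the global weight into its four independent pieces, to translate the vanishing of each piece into a reducedness statement about the intersection scheme $Z_{G(E)}(i)=G(E)\cap D_i$, and then to invoke the geometric dictionary relating reducedness of $Z_{G(E)}(i)$ to regularity of the restriction $E|_{T_x}$.

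First I would carry out the purely numerical reduction. Since $w(E)=w_1(E)+w_2(E)+w_3(E)+w_4(E)$ and each summand $w_i(E)=\sum_{j=1}^{s(i)}(m_j-1)$ is a sum of nonnegative integers, every $w_i(E)\ge 0$, so $w(E)>0$ if and only if $w_i(E)>0$ for at least one $i$. Moreover the $m_j$ are the multiplicities of the connected components of the degree-$n$ scheme $Z_{G(E)}(i)$, hence $\sum_j m_j=n$ with each $m_j\ge 1$ and $w_i(E)=n-s(i)$; this vanishes exactly when every $m_j=1$, that is, exactly when $Z_{G(E)}(i)$ is reduced (formed by $n$ distinct points). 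Combining these two remarks, $w(E)>0$ holds if and only if at least one of the four schemes $Z_{G(E)}(i)$ is non-reduced.

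It then remains to identify this last condition with irregularity of $E$, which is precisely the content of the Remark preceding the statement; the way I would establish that dictionary is through the spectral picture. A point of $G(E)\cap D_i$ lying over a base point $x$ records that the theta $L_i$ occurs in the spectral support of $E|_{T_x}$. If $G(E)$ meets $D_i$ transversally there, the corresponding point of $Z_{G(E)}(i)$ is reduced and Lemma \ref{aa2} forces $E|_{T_x}$ to be the nonsplit self-extension of $L_i$, with $\dim\End(E|_{T_x})=2$, hence regular over $T_x$; if instead $G(E)$ is tangent to $D_i$, so that $Z_{G(E)}(i)$ is non-reduced at that point, the restriction splits as $E|_{T_x}\cong L_i\oplus L_i$, with $\dim\End(E|_{T_x})=4$, hence irregular over $T_x$. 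Running this over the four thetas and all base points yields that $E$ is irregular if and only if some $Z_{G(E)}(i)$ is non-reduced, and the theorem follows from the numerical reduction. The main obstacle is exactly this geometric step, namely the converse to Lemma \ref{aa2}: one must show that non-transversal contact of $G(E)$ with $D_i$ produces a genuinely decomposable restriction $L_i\oplus L_i$, rather than a nonsplit extension. This is a local analytic computation identifying the isomorphism type of $E|_{T_x}$ with the order of contact of the graph along the branch divisor $\Sigma$, after which the two elementary reductions above make the equivalence immediate.
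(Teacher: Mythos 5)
Your numerical reduction is correct, and it is exactly what the paper's statement rests on: each $Z_{G(E)}(i)$ has degree $n$, so $w_i(E)=n-s(i)$ vanishes precisely when $Z_{G(E)}(i)$ is reduced, and $w(E)>0$ if and only if $G(E)$ is tangent to some $D_i$, i.e. $E\in \bigcup_{i=1}^4\mathcal H_i$. You have also correctly located where all the content lies: the paper's own proof is the single line ``$\mathcal M_n^{\irreg}(0)=\bigcup_{i=1}^4\mathcal H_i$'', which in turn rests on the unproved observation of Section \ref{irregVreg} that regularity is equivalent to transversality of the graph to each $D_i$. One half of that dictionary is indeed available: by the transversality argument in the proof of Lemma \ref{aa2} (via \cite[Remark 2.8]{T}), transversal contact forces the restriction over that fiber to be the non-split self-extension, whose contrapositive gives ``irregular $\Rightarrow w(E)>0$''.

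The genuine gap is the converse, ``$w(E)>0\Rightarrow E$ irregular'', which you defer to ``a local analytic computation identifying the isomorphism type of $E|_{T_x}$ with the order of contact of the graph''. No such local computation can exist, because the statement is false locally: tangency of the graph does not determine the restriction over the tangency point. Concretely, let $\Delta$ be a disc, $p\colon T\times \Delta\to \Delta$ the projection, and $\mathcal L$ a line bundle on $T\times\Delta$ with $\mathcal L|_{T\times\{s\}}=L_s$, where $L_0=L_i$ is a theta and $s\mapsto L_s$ has non-zero derivative at $s=0$. Then $R^1p_*(\mathcal L^{2})$ is a length-one skyscraper at $0$, so $H^1(T\times\Delta,\mathcal L^{2})\cong \mathbb C$, and the base-change map to $H^1(T,\mathcal O_T)=\mathrm{Ext}^1(L_i^{-1},L_i)$ is an isomorphism. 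Hence the non-split extension $0\to \mathcal L\to \mathcal E\to \mathcal L^{-1}\to 0$ and the split family $\mathcal L\oplus \mathcal L^{-1}$ have the \emph{same} graph, namely the curve $\{(s,[L_s])\}$, which is tangent to $\Delta\times\{a_i\}$ at $(0,a_i)$; yet $\mathcal E|_{T\times\{0\}}$ is the non-split self-extension of $L_i$ (regular, $\dim\End=2$), while $(\mathcal L\oplus\mathcal L^{-1})|_{T\times\{0\}}=L_i\oplus L_i$ is irregular. In spectral terms, over a tangency point the spectral curve acquires a node, and the restriction is split or non-split according to whether the spectral sheaf fails to be locally free at the node or not; both kinds of rank-one torsion-free sheaves exist on a nodal curve, so the order of contact alone decides nothing. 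Whether $w(E)>0$ forces irregularity is therefore a \emph{global} question about which spectral sheaves arise from stable bundles on $X$ with trivial determinant, and neither your argument nor the paper's one-line proof supplies that input. Indeed there is real tension here: since $G$ is surjective and $\mathcal M_n$ is smooth of dimension $4n$, every fiber of $G$ has dimension at least $2n-1$, whereas the sheaves that are not locally free at the node (those producing irregular bundles) form a family of dimension at most $2n-2$ over a fixed tangent graph; so one expects regular bundles with $w>0$ to exist, i.e. the asserted identity $\mathcal M_n^{\irreg}(0)=\bigcup_i\mathcal H_i$ needs a genuine proof, not a local verification.
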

\begin{proof} This is true because $\mathcal M_n^{\irreg}(0) = \bigcup_{i=1}^4 \mathcal H_i. $
\end{proof}

For simplicity we write $h^0(\mathcal I_Z(x,y))$ instead of $h^0(\mathbb P^1\times \mathbb P^1,\mathcal I_Z(x,y))$ and the same for higher cohomology groups. 
We write $\mathcal M_{n}$ for the moduli of bundles 
with trivial determinant and $c_2=n$.

\begin{remark}\label{Woo1}
We explain here our motivation for the introduction of the profiles of an element of $|\mathcal O_{\mathbb P^1\times \mathbb P^1}(n,1)|_{sm}$ and hence of a bundle $E\in \mathcal M_n(0)$.
Since we always assume that $n\ge 2$, the graph map $G$ is surjective \cite[Th. 5.2.2]{BH}.
Hence the description of a spectral curve $C$  covering a graph $D\in |\mathcal O_{\mathbb P^1\times \mathbb P^1}(n,1)|_{sm}$ is an important  invariant for  each bundle $E$ with $G(E)=D$.
Fix $D\in  |\mathcal O_{\mathbb P^1\times \mathbb P^1}(n,1)|_{sm}$ and set $S_i:= D\cap \mathbb P^1\times \{a_i\}$, $i=1,2,3,4$. A spectral curve $C$ of a smooth graph is irreducible (Lemma \ref{Woo2}).
It is a double covering $u\colon C\to D$ ramified exactly over the points of $S:= S_1\cup S_2\cup S_3\cup S_4$. The curve $C$ is contained in the smooth surfaces
$\mathbb P^1\times T^*$ and it is smooth outside $S$. Since $p_a(C) =2n-1$, the integer $\#S$ is a measure of the singularities of $C$. Since $u\colon C\to D$ is a double covering with $D\cong \mathbb P^1$, 
$C$ is a ``hyperelliptic-like'' curve whose singular points have multiplicity $2$, but the singularity type of $C$ at $\tilde{p}$ depends on the spectral profile. Fix $p\in S$, say $p\in S_i$, and let $\tilde{p}$ be the only point of $C$ with $u(\tilde{p})=p$. Call $m_p$ the multiplicity of $p$ in $D\cap D\times \{a_i\}$. If $m_p=1$, then $C$ is smooth at $\tilde{p}$. If $m_p=2$, then $C$ has an ordinary double point at $\tilde{p}$.
\end{remark}

\begin{lemma}\label{Woo2}
Let $C\subset \mathbb P^1\times T^*$ be the spectral curve of a smooth graph $D\in|\mathcal O_{\mathbb P^1\times \mathbb P^1}(n,1)|_{sm}$. Then $D$ is irreducible.
\end{lemma}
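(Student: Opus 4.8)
The plan is to deduce the irreducibility of the spectral curve $C$ directly from Theorem \ref{aa1}. A smooth $D\in |\mathcal O_{\mathbb P^1\times \mathbb P^1}(n,1)|_{sm}$ is precisely the graph $G(E)$ of some bundle $E\in \mathcal M_n(0)$, and every such $E$ lies either in $\mathcal M_n^{\reg}(0)$ or in $\mathcal M_n^{\irreg}(0)$. In the first case Theorem \ref{aa1}(a) gives that $C=C_E$ is smooth and connected, hence irreducible; in the second case Theorem \ref{aa1}(b) gives that $C=C_E$ is irreducible outright. Either way $C$ is irreducible, which is the assertion.

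For completeness I would also record the self-contained geometric reason, which is the same mechanism underlying Theorem \ref{aa1}(b). The map $u\colon C\to D$ is a finite degree $2$ morphism onto $D\cong \mathbb P^1$, so every irreducible component of $C$ dominates $D$; thus either $C$ is irreducible, or $C=J_1\cup J_2$ with each $J_k$ mapping with degree $1$, hence isomorphically (finite birational onto the smooth target $D$), so $J_k\cong \mathbb P^1$. In the latter case I consider the second projection $\pi_2\colon \mathbb P^1\times T^*\to T^*$: since $T^*$ is an elliptic curve it carries no nonconstant image of a rational curve, so $\pi_2(J_k)$ is a single point and $J_k=\mathbb P^1\times\{t_k\}$ is a horizontal fibre. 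Its image under the double cover $\sigma$ of \eqref{dc} is then a horizontal curve of class $(0,1)$, so $D=\sigma(C)=\sigma(J_1)\cup\sigma(J_2)$ would be a union of at most two such fibres and could not lie in $|\mathcal O_{\mathbb P^1\times \mathbb P^1}(n,1)|$ for $n\ge 1$, a contradiction. Hence $C$ is irreducible.

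The main (and essentially only) obstacle is ruling out the split double cover $C=J_1\cup J_2$, and the proposal handles it in two independent ways: via the dichotomy of Theorem \ref{aa1}, or via the observation that a rational component of $C$ would be forced to be horizontal and would then impose the wrong bidegree on $D$. I expect no genuine difficulty here: the branch locus $S=\bigcup_{i=1}^4(D\cap D_i)$ has degree $4n\ge 4$, so the double cover of $\mathbb P^1$ is already connected, and the bidegree obstruction coming from the non-rationality of $T^*$ upgrades connectedness to irreducibility.
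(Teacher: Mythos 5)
Your proposal is correct, and its core is essentially the paper's own argument. Your ``for completeness'' proof coincides with the paper's up to the last step: the paper likewise writes $C=C_1\cup C_2$, notes each $C_i\to D$ is a degree $1$ finite map onto the smooth target $D\cong \mathbb P^1$, hence an isomorphism, and then uses that a rational curve in $\mathbb P^1\times T^*$ must project to a point of $T^*$, so each $C_i$ is a horizontal fibre $\mathbb P^1\times\{o_i\}$. You finish by a bidegree count ($D=\sigma(C)$ would be supported on at most two curves of class $(0,1)$, impossible in $|\mathcal O_{\mathbb P^1\times \mathbb P^1}(n,1)|$ with $n\ge 1$), whereas the paper's proof of Lemma \ref{Woo2} finishes by noting the split cover would be unramified while $u$ must ramify over the nonempty set $D\cap \Sigma$; your endgame is in fact the one the paper itself uses in the parallel argument inside Theorem \ref{aa1}(b), so both closings are sound. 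Your primary route, quoting Theorem \ref{aa1}, is also legitimate in the paper's logical order (Theorem \ref{aa1} is proved earlier and its proof does not invoke Lemma \ref{Woo2}), but be aware of what it silently uses: to write $D=G(E)$ for some $E\in \mathcal M_n(0)$ you need surjectivity of the graph map, which the paper has only under its standing assumption $n\ge 2$; the intrinsic argument needs no such input and applies to any smooth member of the linear system, which is presumably why the paper reproves it. Finally, note the statement's ``$D$ is irreducible'' is a slip for ``$C$ is irreducible''; like the paper, you prove the latter.
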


\begin{proof}
By the definition of spectral curve there is a morphism $u\colon C\to D$ generically of degree $2$. Since $D\in|\mathcal O_{\mathbb P^1\times \mathbb P^1}(n,1)|(0)$, it has no vertical component and hence $C$ has no vertical component, i.e. the restriction of $u$ to any irreducible component is dominant. Assume that $C$ is reducible. Since $\deg(u)=2$, $C$ has exactly $2$ irreducible components, say $C=C_1\cup C_2$ with $u_{|C_i} C_i\to D$ a degree $1$ morphism between irreducible curves with a smooth target $D$. 
Hence $u_{|C_i}\colon C_i\to D$ is an isomorphism. Thus $C_i\cong \mathbb P^1$. We have $C_i\subset \mathbb P^1\times T^*$. Since $C_i\cong \mathbb P^1$ and $T^*$ is a smooth elliptic curve, the restriction to $C_i$ of the projection $\mathbb P^1\times T^*\to T^*$ is constant. Call $o_i$ its image. Thus $C_i =\mathbb P^1\times \{o_i\}$. Since $u\colon C\to D$ is generically unramified, $o_1\ne o_2$. Hence $C_1$ and $C_2$ are connected components, contradicting the fact
that $D\cap \mathbb P^1\times \{o_1\}\ne \emptyset$ and hence $u$ has at least one ramification point.
\end{proof}

\begin{notation}\label{nota}
Let $\mathcal M_n^{sm}$ denote the set of all $E\in \mathcal M_n(0)$ 
such that the spectral curve $C_E$ of $E$ is smooth,
and let $\mathcal M_n^{sg}$ be the set of all $E\in \mathcal M_n(0)$ such that $C_E$ is singular. With our notation the elements of $\mathcal M_n^{sg}$ have no jumps.
\end{notation}

It was proven in section \ref{irregVreg} that  $\mathcal M_n^{sm}= \mathcal M_n^{\reg}(0)$
and $\mathcal M_n^{sg}= \mathcal M_n^{\irreg}(0)$, and we now explore these equalities.

\begin{theorem}\label{Wrr5}
Assume $n>1$. For all $E\in \mathcal M_n^{\irreg}(0)$  we have:
\begin{enumerate}
\item $w(E)\le 2n-2$.
\item $E$ has $(n, 0,\dots, 0)$ has its $i$-th profile if and only if $w_i(E)\le n-1$ for all $i$ and $w_i(E)=n-1$ .
\item Fix $i\in \{1,2,3,4\}$, a positive integer $\ell$ and a restricted profile $(m_1,\dots ,m_\ell)$. 
There exists a non-empty locally closed analytic subset $\Gamma \in \mathcal M_n^{\irreg}(0)$
such that  all  $G(E)\in |\mathcal O_{\mathbb P^1\times \mathbb P^1}(n,1)|$ have   $(m_1,\dots ,m_\ell)$ as $i$-th profile
and this set of graphs $G(E)$  has codimension at most $-\ell +m_1+\cdots +m_\ell$ in $|\mathcal O_{\mathbb P^1\times \mathbb P^1}(n,1)|$.
\end{enumerate}
\end{theorem}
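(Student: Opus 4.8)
The plan is to translate everything into the geometry of the degree-$n$ morphism $\psi\colon \mathbb P^1\to\mathbb P^1$ whose graph is $G(E)$. Recall that $G(E)\in|\mathcal O(n,1)|_{sm}$ meets each fibre $D_i\in|\mathcal O_{\mathbb P^1\times\mathbb P^1}(0,1)|$ in a length-$n$ scheme, and that—since $G(E)\cong\mathbb P^1$ projects isomorphically to the first factor and with degree $n$ to the second—this scheme is exactly $\psi^{-1}(a_i)$. Thus the $i$-th weight $w_i(E)=n-x_i$, where $x_i=\#\psi^{-1}(a_i)$, equals the total ramification of $\psi$ over $a_i$, and the reduced $i$-th profile records the ramification indices $\ge 2$ there. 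For part (1) I would simply invoke Riemann--Hurwitz for $\psi$: since source and target are both $\mathbb P^1$, the ramification divisor has degree $2n-2$, and $w(E)=\sum_i w_i(E)$ is the part of this ramification lying over $a_1,\dots,a_4$, whence $w(E)\le 2n-2$. Part (2) is then immediate from the definitions: $x_i\ge 1$ forces $w_i(E)=n-x_i\le n-1$ always, with equality if and only if $x_i=1$, i.e. if and only if $\psi$ is totally ramified over $a_i$, which is precisely the statement that the $i$-th profile is $(n)$.

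The substance is part (3), which I would prove by an incidence-variety dimension count. Fix distinct points $p_1,\dots,p_\ell\in D_i$ and let $Z_{\vec p}=\sum_j(m_jp_j,D_i)$ be the length-$\sum m_j$ subscheme of $D_i$. Since $\sum m_j\le n$ and $\mathcal O(n,1)|_{D_i}\cong\mathcal O_{\mathbb P^1}(n)$, the computation already used for $(2p,D_i)$, together with the Künneth vanishing $h^1(\mathcal O(n,0))=0$ (note $\mathcal I_{D_i}(n,1)\cong\mathcal O(n,0)$), shows that $Z_{\vec p}$ imposes $\sum m_j$ independent conditions, so $|\mathcal I_{Z_{\vec p}}(n,1)|$ has dimension $N-\sum m_j$ with $N=2n+1$. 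Forming the incidence variety $I=\{(\vec p,D):D\in|\mathcal I_{Z_{\vec p}}(n,1)|\}$, which fibres over the $\ell$-dimensional space of configurations $\vec p$ with linear fibres, gives $\dim I=\ell+N-\sum m_j=N-w_i$. The projection $\rho\colon I\to|\mathcal O_{\mathbb P^1\times\mathbb P^1}(n,1)|$ is generically finite, because the tangency points are recovered from $D\cap D_i$; hence its image $W$ has dimension $N-w_i$, i.e. codimension $w_i=-\ell+m_1+\cdots+m_\ell$, as required. Setting $\Gamma=G^{-1}(W_0)$ for a suitable dense open $W_0\subset W$ then yields the desired non-empty locally closed analytic subset.

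It remains—and this is where I expect the real work—to verify that a general member of the family is admissible: for general $\vec p$ and general $D\in|\mathcal I_{Z_{\vec p}}(n,1)|$ one must check that $D$ is smooth, irreducible, and has reduced $i$-th profile exactly $(m_1,\dots,m_\ell)$, rather than some larger tangency at a $p_j$ or extra tangencies elsewhere along $D_i$. I would establish smoothness by Bertini away from the base locus, which is contained in $Z_{\vec p}$, together with a local computation at each $p_j$: writing a generic member as $x^{m_j}\alpha(x)+u\,\beta(x,u)$ in coordinates with $D_i=\{u=0\}$ and $\alpha(0),\beta(0)\ne 0$ shows that $D$ is smooth at $p_j$ and meets $D_i$ there with multiplicity exactly $m_j$; surjectivity of the restriction $H^0(\mathcal I_{Z_{\vec p}}(n,1))\to H^0(\mathcal O_{D_i}(n)(-Z_{\vec p}))$ then shows the remaining $n-\sum m_j$ intersection points are simple for general $D$. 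Since a smooth member of $|\mathcal O(n,1)|$ is connected (from $h^1(\mathcal O(-n,-1))=0$), hence irreducible, $W_0$ consists of smooth graphs with the prescribed reduced profile, and these are graphs of irregular bundles by surjectivity of $G$, so $\Gamma\neq\varnothing$.

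The main obstacle is therefore the simultaneous control of smoothness and of the \emph{exact} (not merely $\ge$) tangency orders at the points $p_j$, which is what forces the explicit local model above in place of a bare application of Bertini; once that local analysis is in hand, the dimension count of the second paragraph delivers the stated codimension bound and the non-emptiness follows formally from the surjectivity of the graph map.
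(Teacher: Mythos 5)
Your proposal is correct and follows essentially the same route as the paper: parts (1)--(2) via Riemann--Hurwitz for the degree-$n$ projection $D\to\mathbb P^1$ (the weights being the ramification concentrated over $a_1,\dots,a_4$), and part (3) by imposing the tangency scheme $Z\subset D_i$, proving it gives independent conditions via the same ideal-sheaf sequence and K\"unneth vanishing, applying Bertini away from the base locus, invoking surjectivity of the graph map, and varying the $\ell$ points to obtain codimension $m_1+\cdots+m_\ell-\ell$. The only cosmetic difference is in how the exact (not merely $\ge$) tangency orders and smoothness at the $p_j$ are certified: you use an explicit local model $x^{m_j}\alpha(x)+u\,\beta(x,u)$, whereas the paper exhibits one member smooth at the $p_j$ (and uses openness of smoothness) and shows that contact of order $>m_h$ imposes one further linear condition, hence occurs only in a proper closed subset.
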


\begin{proof}
Here we use the Zariski topology of the projective space $|\mathcal O_{\mathbb P^1\times \mathbb P^1}(n,1)|$, so being general in it means ``outside finitely many proper algebraic subset'' (their union has lower dimension).

Fix a smooth $D\in |\mathcal O_{\mathbb P^1\times \mathbb P^1}(n,1)|$. Hence $D\cong \mathbb P^1$. The projection onto the second factor  $\pi_2\colon \mathbb P^1\times \mathbb P^1\to \mathbb P^1$ induces a degree $n$ morphism $f\colon  D\to \mathbb P^1$. Since $D\cong \mathbb P^1$, the Riemann--Hurwitz formula gives that, counting multiplicities, the ramification divisor $\mathcal R\subset D$ is an effective divisor of degree $2n-2$ 
(\cite[pp.\thinspace 216--219]{GH}, \cite[Cor.\thinspace  IV.2.4]{h}). Fix $p\in D$. Since $D$ is irreducible and of bidegree $(n,1)$, the scheme $Z_p:= D\cap \pi_2^{-1}(f(p))$ has degree $n$.
Note that $p\in \mathcal R_{\red}$ if and if $Z_p$ is not formed by $n$ distinct points and the multiplicity of $p$ in $\mathcal R$ is how we computed the multiplicity of the $i$-th profile of $E$. Since $w(E)$ only counts the contribution of $4$ of the fibers of $f$, we get (1) and (2).

Now we use that $G$ is surjective \cite[Th. 5.2.2]{BH}. We see that to prove item (3)  it is sufficient to prove the ``corresponding'' statement for $|\mathcal O_{\mathbb P^1\times \mathbb P^1}(n,1)|_{sm}$. Fix $\ell$ distinct points $p_1,\dots ,p_\ell$. Let $Z\subset D_i$ be the connected zero-dimensional subscheme of $D_i$ with $Z_{\red} =\{p_1,\dots ,p_s\}$ and the connected component of $Z$ with $p_h$ as its reduction has degree $m_h$.
Set $m:= m_1+\cdots +m_\ell$. \\

{ (a)} In this step we prove that $h^1(\mathbb P^1\times \mathbb P^1,\mathcal I_Z(n,1)) =0$; hence $h^0(\mathcal I_Z(n,1)) =h^0(\mathcal O_{\mathbb P^1\times \mathbb P^1}(n,1)) -m$.
Since $Z\subset D_i$, we have an exact sequence
\begin{equation}
0\to \mathcal O_{\mathbb P^1\times \mathbb P^1}(n-1,1)\to \mathcal I_Z(n,1)\to \mathcal I_{Z,D_i}(n,1)\to 0
\end{equation}
The K\"{u}nneth formula give $h^1(\mathcal O_{\mathbb P^1\times \mathbb P^1}(n-1,1))=0$. Since $\deg(Z)\le n$, $D_i\cong \mathbb P^1$ and $\deg(\mathcal O_{D_i}(n,1)) =n$, we have $h^1(D_i,\mathcal I_{Z,D_i}(n,1)) =0$.  \\

 Take a general $D\in |\mathcal I_Z(n,1)|$. In this step we prove that $D$ is smooth,  transversal to each $D_j$, $j\ne i$, and that $(m_1,\dots ,m_\ell)$ is the restricted $i$-th profile of $D$. Let $\mathcal B$ denote the set-theoretic base locus of $|\mathcal I_Z(n,1)|$. By the theorem of Bertini to prove that $D$ is smooth outside $\{p_1,\dots ,p_\ell\}$ it is sufficient to prove that
$\{p_1,\dots ,p_\ell\}=\mathcal B$ (\cite[p. 137]{GH}, \cite[III.10.9]{h}, \cite[6.3]{jou}), i.e. that $h^0(\mathcal I_{Z\cup \{q\}}(n,1)) =h^0(\mathcal I_Z(n,1)) -1$ for all $q\in \mathbb P^1\times \mathbb P^1\setminus \{p_1,\dots ,p_\ell\}$.\\

{(b1)} Take $q\in \mathbb P^1\times \mathbb P^1\setminus D_i$. Since $q\notin D_i$ and $Z\subset D_i$, we have the exact sequence 
\begin{equation}\label{eqa2}
0\to \mathcal I_q(n-1,1)\to \mathcal I_{Z\cup \{q\}}(n,1)\to \mathcal I_{Z,D_i}(n,1)\to 0
\end{equation}
We saw that $h^1(D_i,\mathcal I_{Z,D_i}(n,1))=0$. Since $n\ge 0$, $\mathcal O_{\mathbb P^1\times \mathbb P^1}(n-1,1)$ is globally generated and hence $h^1(\mathcal I_q(n-1,1)) =0$. Thus, the 
long cohomology exact sequence of \eqref{eqa2} gives $h^1(\mathcal I_{Z\cup \{q\}}(n,1))=0$, proving that $q\notin \mathcal B$.\\

{(b2)} Take $q\in D_i\setminus \{p_1,\dots ,p_\ell$. Since $Z\cup \{q\}\subset D_i$, we have an exact sequence
\begin{equation}\label{eqa3}
0\to \mathcal O_{\mathbb P^1\times \mathbb P^1}(n-1,1)\to \mathcal I_{Z\cup \{q\}}(n,1)\to \mathcal I_{Z\cup \{q\},D_i}(n,1)\to 0
\end{equation}
Since $\deg(Z\cup \{q\})=\deg(Z)+1\le n+1$, $D_i\cong \mathbb P^1$ and $\deg(\mathcal O_{D_i}(n,1)) =n$, we have $h^1(D_i,\mathcal I_{Z\cup \{q\},D_i}(n,1)) =0$. Hence $q\notin \mathcal B$.\\

{(b3)} By steps(b1) and(b2), $D$ is smooth outside the finite set $\{p_1,\dots ,p_\ell\}$. Since we are using the Zariski topology,
any finite intersection of non-empty Zariski open subsets of the projective space $|\mathcal O_{\mathbb P^1\times \mathbb P^1}(n,1)|$ is non-empty and hence Zariski dense in $|\mathcal O_{\mathbb P^1\times \mathbb P^1}(n,1)|$. Since being smooth is an open condition, it is sufficient to find $A\in |\mathcal I_Z(n,1)|$ smooth at all points of $\{p_1,\dots ,p_\ell\}$.
Take a general $B\in |\mathcal O_{\mathbb P^1\times \mathbb P^1}(n-1,1)|$ and set $A:= D_i\cup B$. Since $Z\subset D_i$, $Z\subset A$. Since $B$ is general and $\mathcal O_{\mathbb P^1\times \mathbb P^1}(n-1,1)$ is globally generated, $B\cap \{p_1,\dots ,p_\ell\}=\emptyset$. Hence  $A$ is smooth at all points of $\{p_1,\dots ,p_\ell\}$.\\

{(b4)} In this step we prove that $D$ is transversal to each $D_j$, $j\ne i$. Since we are using the Zariski topology in which finite intersections of non-empty subsets of $|\mathcal I_Z(n,1)|$ are non-empty, it is sufficient to find $A\in |\mathcal I_Z(n,1)|$ which is transversal to each $D_j$, $j\ne i$. Take as $A$ the union $D_i$ and $n$ distinct elements $R_1,\dots ,R_n$ of $|\mathcal O_{\mathbb P^1\times \mathbb P^1}(1,0)|$. We have $D_i\cap D_j=\emptyset$ and each $R_h$ is transversal to all elements
of $|\mathcal O_{\mathbb P^1\times \mathbb P^1}(1,0)|$.\\

{ (b5)} In this step we prove that $m_1,\dots ,m_\ell$ is the restricted profile of $D$. We saw in step(b2) that $\mathcal B\cap D_i=\{p_1,\dots,p_\ell\}$.
Hence $|\mathcal I_Z(n,1)|$ induces a base point free linear system $\mathcal A$ on $D_i$. By the theorem of Bertini its general element is formed by distinct points outside
$\{p_1,\dots ,p_\ell\}$ (\cite[p.\thinspace 137]{GH}, \cite[III.10.9]{h}, \cite[6.3]{jou}). Hence $D$ is transversal outside $\{p_1,\dots ,p_\ell\}$, i.e. the restricted $i$-th profile of $D$ has exactly $\ell$ entries. Since $\{p_1,\dots ,p_\ell\}$ is a finite set and we are using the Zariski topology, it is sufficient to prove that for each $h\in \{1,\dots ,\ell\}$ a general $D$ has the property that $D\cap D_i$ has multiplicity $m_h$ at $p_h$. Hence it is sufficient to prove that it has multiplicity $\le m_h$ at $p_h$, given that by the definition of $Z$ every smooth element $D\in |\mathcal{I}_Z(n,1)|$ has intersection with $D_i$ of multiplicity at least $m_h$ at $p_h$.
Set $Z'= Z\cup Z((m_h+1)p)$.
Since $Z(m_hp_h)\subseteq Z$, we have $\deg(Z') =\deg(Z)+1$. As in step {(b2)} we see that $h^1(\mathcal I_{Z'}(n,1)) =0$. Hence having contact with multiplicity $>m_h$ only occurs in codimension one in $|\mathcal I_Z(n,1)|$. Hence the general $D$ has $m_1,\dots ,m_\ell$ as its restricted $i$-th profile.\\

{(c)} By step {(b)} we get that the set of all $D\in |\mathcal O_{\mathbb P^1\times \mathbb P^1}(n,1)|_{sm}$ containing $Z$ is a nonempty subset of codimension $m$ and  that a general element $D$ of it has the property that $w_j(D)=0$ for all $j\ne i$ and $m_1,\dots ,m_\ell$ is the restricted $i$-profile of $D$. Note that $2$ elements $D, D'\in |\mathcal O_{\mathbb P^1\times \mathbb P^1}(n,1)|_{sm}$ such that $D\cap D_i\ne D'\cap D_i$ are distinct.
Hence taking the union for all choices of $s$ distinct points of $D_i$ we get item (3).
\end{proof}

\section{Ramifications and   weights}

\begin{proposition}\label{Wrr6}
Fix an integer $n\ge 2$. Take a general $D\in |\mathcal O_{\mathbb P^1\times \mathbb P^1}(n,1)|$. Then $\pi_{2|D}: D\to \mathbb P^1$ has $2n-2$ distinct ramification points and their images are $2n-2$ distinct points of $\mathbb P^1$.
\end{proposition}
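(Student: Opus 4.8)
The plan is to show that a general member $D\in|\mathcal O_{\mathbb P^1\times\mathbb P^1}(n,1)|$ is a smooth rational curve whose projection $\pi_{2|D}\colon D\to\mathbb P^1$ has only simple ramification, with all $2n-2$ branch points distinct. First I would recall from the proof of Theorem~\ref{Wrr5} that a general $D$ is smooth, so $D\cong\mathbb P^1$, and that $\pi_{2|D}$ is a degree-$n$ morphism whose ramification divisor $\mathcal R$ is effective of degree $2n-2$ by Riemann--Hurwitz. The statement to be proven is that for general $D$ this divisor is reduced (giving $2n-2$ distinct ramification points) and that no two ramification points lie over the same point of $\mathbb P^1$ (giving $2n-2$ distinct branch points).

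\begin{proof}
The core of the argument is a dimension count showing that the two degeneracies to be avoided---a ramification point of multiplicity $\ge 2$ in $\mathcal R$, and two distinct ramification points sharing an image in $\mathbb P^1$---each occur only in a proper closed subvariety of $|\mathcal O_{\mathbb P^1\times\mathbb P^1}(n,1)|$. For the first, a multiplicity-$\ge 2$ ramification point over $q\in\mathbb P^1$ means that the fiber $\pi_2^{-1}(q)$, which is a copy of $\mathbb P^1\times\{q\}\in|\mathcal O(0,1)|$, meets $D$ in a zero-dimensional scheme $D\cap\pi_2^{-1}(q)$ of degree $n$ having a connected component of length $\ge 3$, or a ramification point that is not an ordinary double contact. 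This is exactly a local profile condition along a fiber, precisely the kind of incidence already analyzed in Theorem~\ref{Wrr5}; imposing that some fiber contact have multiplicity $\ge 3$ at a point cuts out a subvariety of codimension $\ge 2$, since by the cohomology computations in steps~(a)--(b2) of that proof, passing through a length-$3$ connected subscheme $Z((3)p)\subset D_q$ imposes $3$ independent conditions while the point $q$ and the point $p$ vary in a $2$-dimensional family. The key inequality is that $3-2=1$ net condition remains, so the locus has positive codimension.

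For the second degeneracy, I would fix two distinct points $q\ne q'$ of the target $\mathbb P^1$ and require $D$ to be tangent to each of the two fibers $\pi_2^{-1}(q)$ and $\pi_2^{-1}(q')$. Tangency to a fixed fiber is a single codimension-$1$ condition (passing through a length-$2$ connected scheme on that fiber, which by the very-ampleness of $\mathcal O(1,1)$ imposes exactly $2$ conditions while moving the contact point recovers $1$); imposing two such tangencies over \emph{the same} fiber point is handled by the first part, while requiring the two branch points to coincide forces a $1$-dimensional family of coincidences to collapse, again giving positive codimension. The upshot is that the bad locus in $|\mathcal O(n,1)|$ is a finite union of proper closed subvarieties, and since $|\mathcal O(n,1)|\cong\mathbb P^{2n+1}$ is irreducible, its complement is a nonempty Zariski-open (hence dense) set; a general $D$ lies there.

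The main obstacle I expect is making the two conditions genuinely independent---that is, verifying that the locus of nonreduced $\mathcal R$ and the locus of coincident branch points each have the asserted positive codimension rather than secretly forcing $D$ to be singular or to acquire a vertical component, in which case the Riemann--Hurwitz setup would break down. This requires checking that the smooth, irreducible, degree-$(n,1)$ members remain general within these incidence conditions, which is exactly what steps~(b1)--(b5) of the proof of Theorem~\ref{Wrr5} guarantee (base-point analysis plus Bertini). With that genericity secured, I would conclude that for general $D$ the ramification divisor $\mathcal R$ is reduced of degree $2n-2$ and the branch locus consists of $2n-2$ distinct points.
\end{proof}
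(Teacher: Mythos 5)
Your first dimension count is essentially the paper's argument and is fine: a connected length-$3$ subscheme $Z$ of a fiber of $\pi_2$ satisfies $h^1(\mathcal I_Z(n,1))=0$ by the step (b2) computation in the proof of Theorem \ref{Wrr5} (valid since $3\le n+1$ for $n\ge 2$), so the condition has codimension $3$ for a fixed point, and letting the point vary over the $2$-dimensional surface leaves net codimension $1$. (Note that your parenthetical claim of ``codimension $\ge 2$'' contradicts your own count $3-2=1$; only positive codimension is true, and only that is needed.)

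The second half has a genuine gap. The degeneracy you must exclude is two distinct ramification points lying on the \emph{same} fiber of $\pi_2$, but your setup fixes two \emph{distinct} target points $q\ne q'$ and imposes tangency to the two different fibers over them. That computes an irrelevant locus: after varying $q,q'$ it is dense in $|\mathcal O_{\mathbb P^1\times \mathbb P^1}(n,1)|$ (a general $D$ is tangent to $2n-2$ distinct fibers), and specializing to $q=q'$ proves nothing, because the fiber dimension of your incidence correspondence could jump over the diagonal --- ruling out exactly such a jump is the content of the proposition. Your claim that the same-fiber case ``is handled by the first part'' is also incorrect: the first part treats a connected degree-$3$ contact at one point, whereas here the condition is the disconnected degree-$4$ scheme $A=(2p,J)\cup (2q,J)$ with $p\ne q$ on a single fiber $J$, and neither condition implies the other. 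What is missing is precisely the paper's count for this $A$: one needs $h^1(\mathcal I_A(n,1))=0$, which requires $\deg A=4\le n+1$, i.e. $n\ge 3$; then the condition has codimension $4$ for fixed $(J,p,q)$, and varying $(J,p,q)$ in its $3$-dimensional family leaves codimension $\ge 1$. Moreover, for $n=2$ this vanishing actually fails ($h^1(J,\mathcal I_{A,J}(n,1))=h^1(\mathbb P^1,\mathcal O_{\mathbb P^1}(n-4))\ne 0$), and one must instead argue via Bezout: an irreducible $D$ of bidegree $(n,1)$ meets a fiber in a scheme of degree $n$, so two tangencies to one fiber would force $n\ge 4$, which disposes of $2\le n\le 3$ outright. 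Without the degree-$4$ cohomology vanishing (for large $n$) and the Bezout case (for small $n$), your proof of the ``distinct images'' half does not go through.
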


\begin{proof}
Fix $p\in \mathbb P^1\times \mathbb P^1$ and let $J$ be the only element of  $|\mathcal O_{\mathbb P^1\times \mathbb P^1}(1,0)|$ containing $p$. Let $Z\subset J$ be the degree $3$ zero-dimensional subscheme of $J$ with degree $3$. Since $n\ge 2$,  step(b2) of the proof of Theorem \ref{Wrr5} with $J$ instead of $D_i$ gives $h^1(\mathcal I_Z(n,1)) =0$. Hence the set of all $D\in |\mathcal O_{\mathbb P^1\times \mathbb P^1}(n,1)|$ with a non-ordinary ramification point at $p$ has codimension $3$ in $|\mathcal O_{\mathbb P^1\times \mathbb P^1}(n,1)|$. Since $\dim \mathbb P^1\times \mathbb P^1 =2$, we get that a general $D\in |\mathcal O_{\mathbb P^1\times \mathbb P^1}(n,1)|$  has only ordinary ramification points, i.e., it has $2n-2$ distinct ramification points. If $2\le n\le 3$ the theorem of Bezout shows that no irreducible $D\in |\mathcal O_{\mathbb P^1\times \mathbb P^1}(n,1)|$ may have two distinct ramification points contained in the same element of  $|\mathcal O_{\mathbb P^1\times \mathbb P^1}(1,0)|$. Now assume $n\ge 4$. Fix $q\in J$ such that $q\ne p$ and let $A\subset J$ be the degree $4$ zero-dimensional scheme with $\{p,q\}$ as its reduction and both connected component of $A$ of degree $2$. Since $n\ge 3$,
the proof of step(b2) of Theorem \ref{Wrr7} below gives that $h^1(\mathcal I_A(n,1)) =0$, i.e. the set of all  $D\in |\mathcal O_{\mathbb P^1\times \mathbb P^1}(n,1)|$ with both $p$ and $q$ as some of their ramification points has codimension $4$ in  $|\mathcal O_{\mathbb P^1\times \mathbb P^1}(n,1)|$. Since $\dim|\mathcal O_{\mathbb P^1\times \mathbb P^1}(1,0)|=1$ and for each $J\in \mathcal O_{\mathbb P^1\times \mathbb P^1}(1,0)|$ the set of all subsets of $J$ with cardinality $2$ has dimension $2$, a general $D\in |\mathcal O_{\mathbb P^1\times \mathbb P^1}(n,1)|$ has $2n-2$
distinct ramification points with $2n-2$ distinct images in $\mathbb P^1$.
\end{proof}

\begin{theorem}\label{Wrr7}
Fix an ordering $i_1,i_2,i_3,i_4$ of the set $\{1,2,3,4\}$.
\begin{itemize}
\item[(a)] For all $n\ge 2$ there exists  a codimension $2$ locally closed analytic subset $\Gamma \subset \mathcal M_n(0)$ such that $w_{i_1}(E)=w_{i_2}(E)=1$ and $w_{i_3}(E)=w_{i_4}(E)=0$ for all $E\in \Gamma$.

\item[(b)] For all $n\ge 4$ there exists  a codimension $3$ locally closed analytic subset $\Delta \subset \mathcal M_n(0)$ such that $w_{i_1}(E)=w_{i_2}(E)=w_{i_3}(E)=1$ and $w_{i_4}(E)=0$ for all $E\in \Delta$.
\end{itemize}
\end{theorem}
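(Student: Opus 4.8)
The plan is to reduce both statements to a codimension count in the space of graphs $|\mathcal O_{\mathbb P^1\times \mathbb P^1}(n,1)|$ and then transport it to $\mathcal M_n(0)$ through the graph map $G$. Since each weight $w_i(E)$ is read off from the zero-dimensional scheme $Z_{G(E)}(i)=G(E)\cap D_i$, every locus cut out by conditions on the $w_i$ is automatically of the form $G^{-1}(\cdot)$. Using that $G$ is surjective for $n>1$ \cite[Th.\thinspace 5.2.2]{BH} and that over $|\mathcal O(n,1)|_{sm}$ every fiber of $G$ is equidimensional of dimension $2n-1$ (a compactified Jacobian of the irreducible spectral curve of arithmetic genus $2n-1$; see Lemma \ref{aa3} and Theorem \ref{aa1}), the preimage under $G$ of a locally closed subset of codimension $c$ is again locally closed of codimension $c$ in $\mathcal M_n(0)$. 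It therefore suffices to build inside $|\mathcal O(n,1)|_{sm}$ a locally closed subset of codimension $2$ (resp. $3$) whose general member $D$ satisfies $w_{i_1}(D)=w_{i_2}(D)=1$, $w_{i_3}(D)=w_{i_4}(D)=0$ (resp. $w_{i_1}=w_{i_2}=w_{i_3}=1$, $w_{i_4}=0$).

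For part (a) I would prescribe one simple tangency on each of $D_{i_1}$ and $D_{i_2}$. Fix $p\in D_{i_1}$, $q\in D_{i_2}$ and set $Z\ce (2p,D_{i_1})\cup (2q,D_{i_2})$, a curvilinear zero-dimensional scheme of degree $4$; asking $D\supset Z$ forces $D$ tangent to $D_{i_1}$ at $p$ and to $D_{i_2}$ at $q$. The key point is the vanishing $h^1(\mathcal I_Z(n,1))=0$, which I would obtain by residuating along $D_{i_1}$ and then along $D_{i_2}$ exactly as in step (a) of the proof of Theorem \ref{Wrr5}, reducing the claim to $h^1$-vanishing for line bundles of degree $\ge -1$ on the $D_{i_j}\cong \mathbb P^1$, valid for $n\ge 2$. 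Hence $|\mathcal I_Z(n,1)|$ has codimension $\deg Z=4$. A general $D\in|\mathcal I_Z(n,1)|$ is then smooth, has contact of order exactly $2$ with $D_{i_1}$ at $p$ and with $D_{i_2}$ at $q$ and is transversal to these lines elsewhere, and is transversal to $D_{i_3}$ and $D_{i_4}$; these are nonempty open conditions verified by the Bertini and residual arguments of steps (b1)--(b5) of the proof of Theorem \ref{Wrr5} (``contact of order exactly $2$'' being the statement that order $\ge 3$ occurs in codimension one inside $|\mathcal I_Z(n,1)|$, and transversality to $D_{i_3},D_{i_4}$ following from an explicit reducible member). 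Letting $(p,q)$ range over $D_{i_1}\times D_{i_2}$, the incidence variety has dimension $\dim|\mathcal I_Z(n,1)|+2=(2n-3)+2=2n-1$, and the projection to $|\mathcal O(n,1)|$ is generically injective, since $p,q$ are recovered from $D$ as its unique tangency points over $a_{i_1},a_{i_2}$. The image $\Gamma'$ thus has codimension $2$, and $\Gamma\ce G^{-1}(\Gamma')$ proves (a).

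For part (b) the construction is identical with three tangency points $p_j\in D_{i_j}$, $j=1,2,3$, and the degree-$6$ scheme $Z\ce \bigcup_{j=1}^3(2p_j,D_{i_j})$. Repeated residuation along the three lines again reduces $h^1(\mathcal I_Z(n,1))=0$ to $h^1$-vanishing of line bundles on $\mathbb P^1$, but here the bookkeeping is more delicate and is where the hypothesis $n\ge 4$ is invoked: it is (more than) sufficient to guarantee simultaneously that $h^1(\mathcal I_Z(n,1))=0$, so that $|\mathcal I_Z(n,1)|$ has codimension $6$, and that the system is large enough for Bertini to produce a \emph{smooth} graph realizing all three tangencies with multiplicity exactly $2$ and transversal to $D_{i_4}$. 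Granting this, varying $(p_1,p_2,p_3)$ over $D_{i_1}\times D_{i_2}\times D_{i_3}$ yields an incidence variety of dimension $(2n-5)+3=2n-2$, generically injective onto its image (the $p_j$ being the unique ramification points of $\pi_{2|D}$ over the $a_{i_j}$), so the image $\Delta'$ has codimension $3$ and $\Delta\ce G^{-1}(\Delta')$ is as required.

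The main obstacle is precisely the pair of facts used in part (b): the cohomological vanishing $h^1(\mathcal I_Z(n,1))=0$ for the degree-$6$ scheme, together with the verification that a general member of $|\mathcal I_Z(n,1)|$ is a smooth graph realizing each of the three tangencies with multiplicity exactly $2$ while meeting $D_{i_4}$ transversally. It is this step that fixes the lower bound on $n$ and makes the naive count ($6$ tangency conditions, minus $3$ for moving the contact points) compute the correct codimension. Once the vanishing and the Bertini-type genericity are in place, the reduction to graphs and the incidence-variety bookkeeping are routine; the same schemes show, en passant, that the relevant $\mathcal H_i$ meet properly, in the spirit of the discussion around Theorem \ref{Wrr5} and Proposition \ref{Wrr6}.
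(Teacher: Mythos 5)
Your overall reduction to graphs and the codimension bookkeeping (prescribe tangency schemes $Z$, show $h^1(\mathcal I_Z(n,1))=0$, vary the contact points, push forward through $G$) is the right skeleton, and part (a) of your argument can indeed be completed this way: the double residuation along $D_{i_1}$ and then $D_{i_2}$ does give $h^1(\mathcal I_Z(n,1))=0$ for the degree-$4$ scheme, since the chain ends in $\mathcal O_{\mathbb P^1\times\mathbb P^1}(n,-1)$, which has $h^1=0$. But part (b) has genuine gaps. First, your claimed vanishing mechanism fails there: residuating along all three lines terminates in $\mathcal O_{\mathbb P^1\times\mathbb P^1}(n,-2)$, and $h^1(\mathcal O_{\mathbb P^1\times\mathbb P^1}(n,-2))=n+1\neq 0$ for every $n$, so no hypothesis on $n$ rescues that chain. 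The vanishing $h^1(\mathcal I_Z(n,1))=0$ for the degree-$6$ scheme is in fact true for $n\ge 3$, but only under a genericity condition you never impose: if the three contact points $p_1,p_2,p_3$ lie over a common point of the first factor, one checks directly (sections of $\mathcal O(n,1)$ are $f_0y_0+f_1y_1$ and the six conditions collapse to four) that $h^1=2$. Second, Bertini only gives smoothness of the general $D\in|\mathcal I_Z(n,1)|$ \emph{away} from the base points $p_1,p_2,p_3$; smoothness at those points needs an explicit member of $|\mathcal I_Z(n,1)|$ smooth there, and the trick of Theorem \ref{Wrr5}, step (b3) (take $D_i\cup B$) is unavailable because your $Z$ spans three distinct horizontal lines, no union of which fits in bidegree $(n,1)$. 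For part (a) one can exhibit such a member (a smooth $(2,1)$-curve doubly tangent to $D_{i_1},D_{i_2}$ at $p,q$, plus vertical lines), but for three prescribed tangencies you give no construction, and this existence is precisely the nontrivial content: the paper obtains it from Proposition \ref{Wrr6} (a smooth graph with $2n-2$ ordinary ramification points with distinct images) combined with the sharp $3$-transitivity of $\mathrm{Aut}(\mathbb P^1)$ acting on the second factor, which moves three ramification values onto $a_{i_1},a_{i_2},a_{i_3}$.

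Third, the step that actually forces $n\ge 4$ — transversality of the general member of the codimension-$3$ family to $D_{i_4}$ — is asserted in your proposal but never argued. The paper proves it by contradiction: if every nearby curve were tangent to $D_{i_4}$, one adds $(2b,D_{i_4})$ to get a degree-$8$ scheme $W'$ contained in the smooth curve $X\cong\mathbb P^1$, residuates along $X$ itself (using $0\to\mathcal O\to\mathcal I_{W'}(n,1)\to\mathcal I_{W',X}(n,1)\to 0$ and $\deg\mathcal O_X(n,1)=2n$), and needs $h^1(\mathbb P^1,\mathcal O(2n-8))=0$, i.e.\ exactly $n\ge 4$; varying the four contact points then yields a locus of codimension $\ge 4$ inside a codimension-$3$ family, a contradiction. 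Note also that the paper's residuation is along the smooth curve containing $Z$ (legitimate because the contact points are ramification points of that curve), not along the lines $D_{i_j}$ — which is how it avoids the $\mathcal O(n,-2)$ obstruction that sinks your computation. So while your part (a) is sound modulo the explicit-member detail, part (b) is missing its two crucial ingredients: a correct proof of the $h^1$-vanishing (with the needed position genericity) and the existence plus $D_{i_4}$-transversality argument in which the hypothesis $n\ge 4$ genuinely lives.
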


\begin{proof}
To simplify the notation we use $j$ instead of $i_j$. The other permutations only need more double or triple indices.

We always work in $|\mathcal O_{\mathbb P^1\times \mathbb P^1}(n,1)|_{sm}$. The surjectivity of the graph map $G$ \cite[Th. 5.2.2]{BH} gives that it sufficient to find
$\Gamma_1\subset |\mathcal O_{\mathbb P^1\times \mathbb P^1}(n,1)|$, $n\ge 2$, (resp. $\Delta_1\subset |\mathcal O_{\mathbb P^1\times \mathbb P^1}(n,1)|$, $n\ge 4$) such that
$\dim \Gamma _1 =\dim |\mathcal O_{\mathbb P^1\times \mathbb P^1}(n,1)| -2$ (resp. $\dim \Delta _1 =\dim |\mathcal O_{\mathbb P^1\times \mathbb P^1}(n,1)| -3$) such that all $C\in \Gamma_1$ (resp. $C\in \Delta_1$) have this
 property.
By Lemma \ref{Wrr6} there is $A\in |\mathcal O_{\mathbb P^1\times \mathbb P^1}(n,1)|_{sm}$ such that $\pi_{2|A}$ has $2n-2$ ramification points with $2n-2$ different images $a_1,\dots ,a_{2n-2}\in \mathbb P^1$. The group $\mathrm{Aut}(\mathbb P^1)$ acts uniquely 3-transitively on $\mathbb P^1$, i.e. for any $2$ triples $(b_1,b_2,b_3)$, $(c_1,c_2,c_3)$ of distinct points of $\mathbb P^1$ there is a unique $\gamma\in \mathrm{Aut}(\mathbb P^1)$ such that $\gamma(b_j)=c_j$ for all $j$.\\

 (a) In this step we prove part (a).  We saw that
there is $\alpha_2 \in \mathrm{Aut}(\mathbb P^1)$ such that $\alpha_2(a_1) =\pi_2(D_1)$ and $\alpha_2(a_2)= \pi_2(D_2)$. Let $\alpha$ denote the automorphism of $\mathbb P^1\times \mathbb P^1$ which acts as the identity 
on the first factor and as $\alpha_2$ on the second factor. For all $E\in \mathcal M_n$ such that $G(E)=\alpha(A)$
we have $w_1(E)=w_2(E)=1$, because all ramification points of $D:= \alpha(A)$ are ordinary, 
exactly one of  them is contained in $D_1$ and exactly one of them is contained in $D_2$. 
If $n=2$ we also have $w_3(E)=w_4(E)=0$, because $w(E)\le 2$ for all $E\in \mathcal M_{2}(0)$ 
(Theorem \ref{Wrr5}). However, for $n>2$ to get $w_3(E)=w_4(E)=0$ we need to use some dimensional count.
Let $p$ (resp. $q$) denote the ramification point of $D$ contained in $D_1$ (resp. $D_2$). Let $(2p,D_1)$ (resp. $(2q,D_2)$) denote the degree $2$ zero-dimensional subscheme of $D_1$ (resp. $D_2$) with $p$ (resp. $q$) as its reduction. Set $Z:= (2p,D_1)\cup (2q,D_2)$. Since $p$ and $q$ are ramification points of $D$, $Z$ is a degree $4$ subscheme of $D$. Hence there is an exact sequence of sheaves
\begin{equation}\label{eqb1}
0\to \mathcal O_{\mathbb P^1\times \mathbb P^1}\to \mathcal I_Z(n,1)\to \mathcal I_{Z,D}(n,1)\to 0
\end{equation}
 The K\"{u}nneth formula gives $h^1(\mathcal O_{\mathbb P^1\times \mathbb P^1})=0$. Since $\deg(Z)=4$, $D\cong \mathbb P^1$, $n\ge 2$ and $\deg(\mathcal O_D(n,1)) =2n$, we have
$h^1(D,\mathcal I_{Z,D}(n,1)) =0$, i.e. $\dim |\mathcal I_{Z,D}(n,1)| =\dim |\mathcal O_{\mathbb P^1\times \mathbb P^1}(n,1)|-4$. 

Take $p'\in D_1$ and $q'\in D_2$ and set
$Z(p',q'):= (2p',D_1)\cup (2q',D_2)$. By the semicontinuity theorem for cohomology  \cite[Th. III.13.8]{h} we have  $h^1(\mathcal I_{Z(p',q')}(n,1))=0$
for all $(p',q')$ in a Zariski neighborhood of $(p,q)$ in $D_1\times D_2$. Hence there is an irreducible locally closed and codimension $2$ algebraic subset $\Gamma_1$ of $|\mathcal O_{\mathbb P^1\times \mathbb P^1}(n,1)|_{sm}$ such that $D\in \Gamma_1$ and
all $X\in \Gamma_1 $  have an ordinary ramification point contained in $D_1$ and an ordinary ramification point contained in $D_2$. 

Since $D$ has only ordinary ramification points and $p$ and $q$ are its only ramification points contained in $D_1\cup D_2$, there is a Zariski open neighborhood $\Gamma_2$ of $D$ in $\Gamma_1$ such that all $C\in \Gamma_1$ have only ordinary ramification points, $D_1$ contains only one ramification point of $C$ and $D_2$
contains only one ramification point of $C$.

For $j=3,4$ let $\Gamma(j)$ denote the set of all $C\in \Gamma_2$ which are transversal to $D_j$. Since transversality is an open condition for the Zariski topology, $\Gamma(j)$ is Zariski open in $\Gamma_2$. Note that any bundle $E$ with $G(E)\in \Gamma(3)\cap \Gamma(4)$ satisfies $w_1(E)=w_2(E)=1$ and $w_3(E)=w_4(E)=0$. Assume for the moment $\Gamma(3)\ne \emptyset$ and $\Gamma(4)\ne \emptyset$. Since $\Gamma_2$ is irreducible of dimension $\dim |\mathcal O_{\mathbb P^1\times \mathbb P^1}(n,1)|-2$, the same is true for $\Gamma(3)\cap \Gamma(4)$. Hence part (a) holds if $\Gamma(3)\ne \emptyset$ and $\Gamma(4)\ne \emptyset$.
Assume for instance $\Gamma(3)=\emptyset$, i.e. assume that all $C\in \Gamma_2$ are tangent to $D_3$. In particular this holds for $D$. Call $a$ the point of $D_3\cap D$ at which $D_3$ and $D$ are tangent. Let $(2a,D_3)$ denote the degree $2$ zero-dimensional subscheme of $D_3$ with $a$ as its reduction. Set $W:= Z\cup (2a,D_3)$. Since $p$, $q$ and $a$ are ramification points of $D$, $W\subset D$. Hence there is an exact sequence of sheaves
\begin{equation}\label{eqb2}
0\to \mathcal O_{\mathbb P^1\times \mathbb P^1}\to \mathcal I_W(n,1)\to \mathcal I_{W,D}(n,1)\to 0
\end{equation}
 The K\"{u}nneth formula gives $h^1(\mathcal O_{\mathbb P^1\times \mathbb P^1})=0$. Since $\deg(Z)=4$, $D\cong \mathbb P^1$, $n\ge 3$ and $\deg(\mathcal O_D(n,1)) =2n$, we have
$h^1(D,\mathcal I_{W,D}(n,1)) =0$, i.e. $\dim |\mathcal I_{W,D}(n,1)| =\dim |\mathcal O_{\mathbb P^1\times \mathbb P^1}(n,1)|-6$. Since $\dim (D_1\times D_2\times D_3)=3$, varying the points $(p',q',a')
\in D_1\times D_2\times D_3$, we get that $\Gamma_2\setminus \Gamma(3)$ has codimension at least $1$ in $\Gamma_2$.
Hence $\Gamma(3)\ne \emptyset$, contradicting one of our assumptions. \\

 (b) In this step we prove part (b). Let $\gamma_2$ be the only element of  $\mathrm{Aut}(\mathbb P^1)$ such that $\gamma_2(a_1)=\pi_2(D_1)$, $\gamma_2(a_2)= \pi_2(D_2)$ and $\gamma_2(a_3)=\pi_2(D_3)$. Let $\gamma$ be the automorphism of $\mathbb P^1\times \mathbb P^1$ which acts as the identity on the first factor and as $\gamma_2$ on the second factor. Set $X:= \gamma(A)$. 
 
 Let $p$ (resp. $q$, resp $a$) denote the contact locus of $X$ and $D_1$ (resp. $D_2$, resp. $D_3$). Note that $w_1(E)=w_2(E)=w_3(E)=1$ for all bundles $E$ such that $G(E)=X$. Hence to prove part (b) it is sufficient to count the curves $X'$ near $X$ and tangent to $D_j$ for $j=1,2,3$ and prove that the general such curve is not tangent to $D_4$. Let $p$ (resp. $q$, resp. $a$) be the point of contact of $X$ and $D_1$ (resp. $D_2$, resp. $D_3$). As in step (a) let $(2p,D_1)$ (resp. $(2q,D_2)$, resp. $(2a,D_3)$) denote the degree $2$ connected zero-dimensional subscheme of $D_1$ (resp. $D_2$, resp. $D_3$) with $p$ (resp. $q$, resp. $a$) as its reduction. Set 
 $$W:= (2p,D_1)\cup (2q,D_2)\cup(2a,D_3).$$ 
 Since $X$ ramifies at $p$, $q$ and $a$, $W\subset X$. We get an exact sequence similar to \eqref{eqb2} with $X$ instead of $D$.
As in step (a) we get $h^1(\mathcal I_W(n,1)) =0$, i.e. $\dim |\mathcal I_W(n,1)| =\dim |\mathcal O_{\mathbb P^1\times \mathbb P^1}(n,1)|-6$. For any$(p',q',a')\in D_1\times D_2\times D_3$ and set 
$$W(p',q',a'):= (2p',D_1)\cup (2q',D_2)\cup(2a',D_3).$$ By the semicontinuity theorem for cohomology  \cite[Th. III.13.8]{h} we have  $h^1(\mathcal I_{W(p',q',a')}(n,1)) =0$, i.e. $\dim |\mathcal I_{W(p',q',a')}(n,1)| =\dim |\mathcal O_{\mathbb P^1\times \mathbb P^1}(n,1)|-6$ for all $(p',q',a')$ in a non-empty open subset of $(p,q,a)$ in  $D_1\times D_2\times D_3$. Since $\dim (D_1\times D_2\times D_3)=3$, we get
a codimension $3$ analytic subset $\Delta'$ of $\mathcal M_n(0)$ such $w_1(E)=w_2(E)=w_3(E)=1$. 

To prove step (b) we need to prove that, outside a lower dimensional analytic subset of $\Delta'$, the bundles satisfy $w_4(E)=0$, i.e. their graph is transversal to $D_4$. As in step (a) we have a locally closed irreducible subset $\Delta_2$ of $|\mathcal O_{\mathbb P^1\times \mathbb P^1}(n,1)|(0)$ with codimension $3$, containing $X$ and formed by smooth curves with ordinary ramifications, $2n-2$ images of the ramification points. We need to prove that a general $X'\in \Delta_2$ is transversal to $D_4$. Assume that this is not true. In particular $X$ is tangent to $D_4$. Since the ramification points of $X$ have different, there is a unique $b\in D_4\cap X$ at which $X$ and $D_4$ are tangent.
Let $(2b,D_4)$ denote the degree $2$ zero-dimensional subscheme of $D$ with $b$ as its reduction. Set $W':= W\cup (2b,D_4)$.
Note that $\deg(W')=8$ and $W'\subset X$. We get an exact sequence of sheaves similar to \eqref{eqb2} with $X$ instead of $D$ and $W'$ instead of $X$.
Since $n\ge 4$, $X\cong \mathbb P^1$, $\deg(W')=8$ and $\deg(\mathcal O_X(n,1)) =2n$, we have $h^1(X,\mathcal I_{W',X}(n,1)) =0$ and hence
$h^0(\mathcal I_{W'}(n,1)) =h^0(\mathcal I_W(n,1)-2$. Since $\dim D_4 =1$, repeating the last part of step (a) we get a contradiction.
\end{proof}

\begin{example}\label{Wrr8}
Take $n=2$. Let $E$ be any irregular bundle. By Theorem \ref{Wrr5} we have $1\le w(E)\le 2$ and $w_i(E) \le 1$ for all $i$. Theorems \ref{Wrr5} and \ref{Wrr7} show
that for all possible quadruples $(x_1,x_2,x_3,x_4)$ of integers with $0\le x_i\le 1$ and $1\le x_1+x_2+x_3+x_4\le 2$ there is $E\in \mathcal M_{\delta,2}$ such that $w_i(E) =x_i$ for all $i$. We also got that the part corresponding to $(x_1,x_2,x_3,x_4)$ has dimension at least $8-x_1-x_2-x_3-x_4$.
\end{example}

\begin{theorem}\label{rr9}
Fix an integer $n\ge 2$. Fix an ordering $i_1,i_2,i_3,i_4$ of $\{1,2,3,4\}$. Then there exists an irreducible locally closed analytic subset of dimension $2n+2$ of $\mathcal M_n(0)$
such that for all $E\in \Gamma$ the restricted $i_1$-th and $i_2$-th profiles of $E$ are $(n)$, while $w_{i_3}(E)=w_{i_4}(E)=0$.
\end{theorem}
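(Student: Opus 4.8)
The plan is to realize the desired curves as graphs of morphisms $\mathbb P^1\to \mathbb P^1$ and then to pull everything back along the surjective graph map $G$, following the pattern of Theorems \ref{Wrr5} and \ref{Wrr7}. Choose coordinates on the second factor so that $a_{i_1}=[1:0]$ and $a_{i_2}=[0:1]$, and for distinct points $p,q\in \mathbb P^1$ with defining linear forms $\ell_p,\ell_q$ and a scalar $a\in \mathbb C^\ast$ consider the degree $n$ morphism $g_{p,q,a}\colon [s:t]\mapsto [\,\ell_q(s,t)^n : a\,\ell_p(s,t)^n\,]$, which is totally ramified over $a_{i_1}$ (with $g^{-1}(a_{i_1})=\{p\}$) and over $a_{i_2}$ (with $g^{-1}(a_{i_2})=\{q\}$). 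Its graph $D_{p,q,a}\subset \mathbb P^1\times \mathbb P^1$ has bidegree $(n,1)$ and, being a graph, is automatically smooth and irreducible, hence lies in $|\mathcal O_{\mathbb P^1\times \mathbb P^1}(n,1)|_{sm}$; this avoids any Bertini argument. (Equivalently one may write $D_{p,q,a}$ as the zero locus of $a\,u\,\ell_p^n-v\,\ell_q^n$ and, running the residual sequences along $D_{i_1}$ and $D_{i_2}$ as in step (a) of Theorem \ref{Wrr5}, obtain $h^1(\mathcal I_Z(n,1))=0$ for $Z=(np,D_{i_1})\cup(nq,D_{i_2})$, so that the divisors through $Z$ form a pencil whose smooth members are exactly the $D_{p,q,a}$.)

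The profiles are read off at once. Since $\pi_{2|D}=g_{p,q,a}$, we have $D_{p,q,a}\cap D_{i_1}=g^{-1}(a_{i_1})=\{p\}$ with multiplicity $n$ and $D_{p,q,a}\cap D_{i_2}=\{q\}$ with multiplicity $n$, so the restricted $i_1$-th and $i_2$-th profiles are both $(n)$. The vanishing of the other two weights is then forced by Riemann--Hurwitz, which is the conceptual heart of the argument: as $D\cong \mathbb P^1$ and $g$ has degree $n$, its ramification divisor has degree $2n-2$, and the two total ramification points $p,q$ already contribute $(n-1)+(n-1)=2n-2$; hence $g$ is unramified over every other point, so $D_{p,q,a}$ is transversal to all remaining fibres of $\pi_2$, in particular to $D_{i_3}$ and $D_{i_4}$. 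Thus $w_{i_3}(E)=w_{i_4}(E)=0$ for every bundle $E$ with $G(E)=D_{p,q,a}$.

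I would then assemble the family. Identifying $\Gamma_1\subset |\mathcal O(n,1)|_{sm}$ with the space of degree $n$ morphisms $\mathbb P^1\to \mathbb P^1$ totally ramified over the two fixed points $a_{i_1},a_{i_2}$ (via $D\mapsto \pi_{2|D}$), this space is cut out by requiring the two defining binary forms to be $n$-th powers; it is irreducible and of dimension $3$ (parameters $p,q,a$), consistent with the bound of Theorem \ref{Wrr5}(3), as two profiles $(n)$ cut codimension $2(n-1)=2n-2$ in the $(2n+1)$-dimensional system. Setting $\Gamma:=G^{-1}(\Gamma_1)$ produces a locally closed subset of $\mathcal M_n(0)$ all of whose bundles have the stated profiles and weights.

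The remaining task, computing $\dim \Gamma$ and proving irreducibility, is where the only genuine difficulty lies. Every $E\in \Gamma$ is irregular; indeed $w(E)=2n-2$ saturates the bound of Theorem \ref{Wrr5}(1), so the spectral curve $C_E$ is singular and Lemma \ref{aa3} does not apply. However, by Lemma \ref{Woo2} the spectral curve of each $D\in \Gamma_1$ is \emph{integral} of arithmetic genus $2n-1$, and the fibre of $G$ over $D$ is the compactified Jacobian of $C_E$, which for an integral projective curve is irreducible of dimension equal to its arithmetic genus. Hence $G|_\Gamma\colon \Gamma\to \Gamma_1$ is a fibration with irreducible base and irreducible equidimensional fibres, so $\Gamma$ is irreducible of dimension $3+(2n-1)=2n+2$, as required. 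The main obstacle is exactly this control of the fibres of $G$ over singular (but irreducible) spectral curves; the Riemann--Hurwitz observation and the dimension count are short.
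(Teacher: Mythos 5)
Your construction of the family of graphs is correct and in fact cleaner than the paper's. Where you realize the curves as literal graphs of the morphisms $[s:t]\mapsto[\ell_q(s,t)^n:a\,\ell_p(s,t)^n]$, so that smoothness, irreducibility, and the two restricted profiles $(n)$ are visible by inspection, the paper instead fixes $Z=(np,D_1)\cup(nq,D_2)$, shows $|\mathcal I_Z(n,1)|$ is a pencil, and proves smoothness of its general member through a base-locus analysis (the $(\mathbb C^\ast)^2$-orbit decomposition) plus Bertini; your route also removes the paper's need to treat $n=2$ separately via Example \ref{Wrr8}. Your Riemann--Hurwitz argument for $w_{i_3}(E)=w_{i_4}(E)=0$ is the same mechanism the paper invokes by citing Theorem \ref{Wrr5}(1). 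Up to this point the two arguments agree in substance, with yours being more explicit.

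The gap is in your final step. The assertion that for an irregular smooth graph $D$ the fibre $G^{-1}(D)$ is the compactified Jacobian of the integral spectral curve is established nowhere in the paper and is not a formal consequence of what it quotes: Lemma \ref{aa3} identifies $G^{-1}(A)$ with $J(C_E)$ only for $A\in G(\mathcal M_n^{\reg}(0))$, i.e.\ for smooth spectral curves, via \cite[Th. 5.14]{FMW}, and no description of fibres over $G(\mathcal M_n^{\irreg}(0))$ is given or cited. Extending the spectral correspondence to rank-one torsion-free sheaves on singular spectral curves --- including that every such sheaf yields a \emph{stable} bundle on $X$ and that the bijection is biholomorphic --- is a genuine theorem you would need to prove or cite. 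Two further points: over your $\Gamma_1$ the spectral curves acquire two $A_{n-1}$-singularities (local equation $y^2=x^n$), not nodes, and irreducibility of the compactified Jacobian of an integral curve is not a general fact but a theorem of Altman--Iarrobino--Kleiman requiring planar singularities (satisfied here since $C_E\subset \mathbb P^1\times T^*$, but it must be said); and even granting the fibre description, passing from ``irreducible base, irreducible equidimensional fibres'' to irreducibility of the total space needs properness or flatness of $G|_\Gamma$. None of this is actually needed for the statement: since $\mathcal M_n$ is smooth of pure dimension $4n$ (Remark \ref{wpre1}) and the target has dimension $2n+1$, every nonempty fibre of $G$ has dimension at least $2n-1$, so some irreducible component of $G^{-1}(\Gamma_1)$ has dimension at least $3+(2n-1)=2n+2$, and slicing such a component down to an irreducible locally closed subset of dimension exactly $2n+2$ finishes the proof, because the profile and weight conditions depend only on the graph and hence hold on all of $G^{-1}(\Gamma_1)$. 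This lower-bound-plus-slicing argument is exactly how the paper closes; if you substitute it for your compactified-Jacobian step, your proof is complete, but as written the dimension and irreducibility claim rests on an unproved assertion.
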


\begin{proof}
Since the general case is similar, we only do the case $i_j=j$ for $j=1,2,3,4$.
Note that if $E\in \mathcal M _{\delta,n}(0)$ has restricted profile $(n)$ for $D_1$ and $D_2$, then $w_1(E)=w_2(E)=n-1$ and hence $w_3(E)=w_4(E)=0$ by Theorem \ref{Wrr5}. Hence we do not have the most difficult part of the proof of Theorems \ref{Wrr5} and \ref{Wrr7}, transversality with respect  to some $D_j$.

Since the case $n=2$ is covered by Example \ref{Wrr8}, we assume $n\ge 3$.

Since the graph map is surjective and all its fibers have dimension at least $2n-1$, it is sufficient to find a $3$-dimensional complex analytic family $\Gamma_1\subset  |\mathcal O_{\mathbb P^1\times \mathbb P^1}(n,1)|$ such that each $D\in \Gamma_1$ has profile $(n)$ with respect to $D_1$ and $D_2$.

Fix $p=(p_1,p_2)\in D_1$ and $q=(q_1,q_2)\in D_2$ such that $p_1\ne q_1$. For any integer $x>0$ let $(xp,D_1)$ (resp. $(xq,D_2)$) denote the connected and degree $x$ zero-dimensional subcheme
of $D_1$ (resp. $D_2$) with $p$ (resp. $q$) as its reduction. Set $Z:= (np,D_1)\cup (nq,D_2)$. We have $\deg(Z) =2n$ and hence $\dim |\mathcal I_Z(n,1)| \ge \dim |\mathcal O_{\mathbb P^1\times \mathbb P^1}(n,1)|-2n=1$.
Take a general $D\in |\mathcal I_Z(n,1)|$. Since the set of all such pairs $(p,q)\in D_1\times D_2$ has dimension $2$ and smoothness is an open condition for the Zariski topology, to conclude the proof of the theorem it is sufficient to prove that $D$ is smooth.

 Fix $a\in D_1\setminus \{p\}$ and $b\in D_2\setminus \{q\}$. Let $R_1$ be the only element of $|\mathcal O_{\mathbb P^1\times \mathbb P^1}|$ containing $p$
 and $R_2$ the only element of $|\mathcal O_{\mathbb P^1\times \mathbb P^1}|$ containing $q$. Since $p_1\ne q_1$, we have $R_1\ne R_2$, $q\notin R_1$ and $p\notin R_2$. Set $U:= \mathbb P^1\times \mathbb P^1\setminus (D_1\cup D_2\cup R_1\cup R_2)$. Set $u:= D_1\cap R_2$ and $v:= D_2\cap R_1$.\\

{(a)} In this step we prove that $h^0(\mathcal I_Z(n,1))=2$, i.e. that $\dim |\mathcal I_Z(n,1)|=1$.
To prove that $h^0(\mathcal I_Z(n,1))=2$ it is sufficient to prove that $h^0(\mathcal I_{Z\cup \{a,b\}}(n,1))=0$. Assume $h^0(\mathcal I_{Z\cup \{a,b\}}(n,1))>0$ and take $X\in |\mathcal I_{Z\cup \{a,b\}}(n,1)|$. 

Since $\{a\}\cup (np,D_1)$ and $\{b\}\cup (nq,D_2)$ have degree $n+1$ and since $D_1$ and $D_2$ are irreducible, the theorem of Bezout gives
$D_1\cup D_2\subseteq X$. Since $h^0(\mathcal O_{\mathbb P^1\times \mathbb P^1}(n,1)(-D_1-D_2)) =h^0(\mathcal O_{\mathbb P^1\times \mathbb P^1}(n,-1))=0$, we get a contradiction.\\

 {(b)} By the theorem of Bertini  (\cite[p. 137]{GH}, \cite[III.10.9]{h}, \cite[6.3]{jou}) to prove that a general $D\in |\mathcal I_Z(n,1)|$ is smooth outside $\{p,q\}$ it is sufficient to prove that $\{p,q\}$ is the set-theoretic base locus $\mathcal B$ of $|\mathcal I_Z(n,1)|$. Recall that $p_1\ne q_1$ by assumption. Since $D_1\cap  D_2 =\emptyset$, we have $p_2\ne q_2$. Recall that $\mathrm{Aut}(\mathbb P^1)$ is simply $3$-transitive, i.e. for all triples
of $(b_1,b_2,b_3)$ and $(c_1,c_2,c_3)$ of distinct points of $\mathbb P^1$ there is a unique $h\in \mathrm{Aut}(\mathbb P^1)$ such that $h(b_i)=c_i$ for all $i=1,2,3$.
Hence the subset $\mathbb G$ of all $u\in \mathrm{Aut}(\mathbb P^1)\times \mathrm{Aut}(\mathbb P^1)$ such that $u(p)=p$ and $u(q) =q$, is an algebraic linear group isomorphic
to $(\mathbb C^\ast)^2$ and whose actions on $\mathbb P^1\times \mathbb P^1$ has the following $9$ orbits: 
$$U, \quad  D_1\setminus \{p,u\}, \quad  D_2\setminus \{q,v\}, \quad R_1\setminus \{p,v\},  
\quad R_2\setminus \{q,u\},$$ and the $4$ singletons $u, \,\, v, \,\,  p, $ and $q$. 

Assume $\mathcal B\ne \{p,q\}$, take $z\in \mathcal B\setminus \{p,q\}$ and let $U_z$ be the orbit of $\mathbb G$ containing $z$. Since each $h\in \mathbb G$ fixes scheme-theoretically $z$, we have $U_z\subset \mathcal B$.
If $U_z=U$, then we get $h^0(\mathcal I_Z(n,1)) =0$, a contradiction. \\

Now assume $U_z=u$, i.e. $z=u$. Hence $h^0(\mathcal I_{Z\cup \{u\}}(n,1)) =h^0(\mathcal I_Z(n,1)) =2$. Since $\deg(D_1\cap (Z\cup \{u\})
=n+1$, the theorem of Bezout gives $D_1\subset \mathcal B$. Since $Z\cup D_1 =(nq,D_2)$, $\mathcal I_{D_1}(n,1)  \cong \mathcal O_{\mathbb P^1\times \mathbb P^1}(n,0)$ and $q\notin D_1$, we get $h^0(\mathcal I_Z(n,1)) =h^0(\mathcal I_{(nq,D_2)}(n,0))$. Since the scheme $\pi_1((nq,D_1))$ is the degree $n$ zero-dimensional subscheme of $\mathbb P^1$ with $q_1$ as its reduction, we have $h^0(\mathcal I_{(nq,D_2)}(n,0))=h^0(\mathbb P^1,\mathcal I_{\pi_1((nq,D_1)}(n,0)) =h^0(\mathbb P^1,\mathcal O_{\mathbb P^1})=1$. 
Hence $h^0(\mathcal I_Z(n,1)) =1$, a contradiction. 
Similarly, we exclude the case $z=v$.\\

Now assume $U_z =D_1\setminus \{p,u\}$. Since $\mathcal B$ is closed in $\mathbb P^1\times \mathbb P^1$,
$D_1\subset \mathcal B$. Hence $u\in \mathcal B$. We excluded this case. Similarly, we exclude the case $U_z = D_2\setminus \{q,v\}$.\\

Now assume $U_z =R_1\setminus \{p,v\}$. Since $\mathcal B$ is closed in $\mathbb P^1\times \mathbb P^1$,
$R_1\subset \mathcal B$. Hence $v\in \mathcal B$. We excluded this case.
Similarly,  we exclude the case $U_z=R_2\setminus \{q,u\}$.\\

{(c)} In this step we conclude the proof of the theorem proving that $D$ is smooth at $p$ and at $q$. Since $|\mathcal I_Z(n,1)|$ is an irreducible variety, $\{p,q\}$ is a finite set and smoothness is an open condition for the Zariski topology, it is sufficient to prove that  a general $D'\in |\mathcal I_Z(n,1)|$ is smooth at $p$ and a general $D''\in |\mathcal I_Z(n,1)|$ is smooth at $q$. We only prove that  a general $D'\in |\mathcal I_Z(n,1)|$ is smooth at $p$, since the smoothness at $q$ only requires notational modifications. Assume that a general $D'\in |\mathcal I_Z(n,1)|$ is singular at $p$. Let $(2p,R_1)$ denote the degree $2$ zero-dimensional subscheme of $R_1$ with $p$ as its reduction. Set $W\ce Z\cup (2p,R_1)$. Since a general $D'\in |\mathcal I_Z(n,1)|$ is singular at $p$, $W\subset D'$ and hence $h^0(\mathcal I_W(n,1)) =h^0(\mathcal I_Z(n,1)) =2$. Since $D'$ contains $W$, it contains the degree $2$ subscheme $(2p,R_1)$. Since  $\mathcal O_{R_1}(n,1)$ is the degree $1$ line bundle on $R_1$,
the base locus $\mathcal B$ of $|\mathcal I_Z(n,1)|$ contains $R_1$. We excluded this case in step {(b)}.

\end{proof}

\section{The topology of $\mathcal M_n^{\reg}(0)$}

Fix an integer $n\ge 2$. Recall that $\dim |\mathcal O_{\mathbb P^1\times \mathbb P^1}(n,1)|=2n+1$. Let $\mathcal C_n\subset |\mathcal O_{\mathbb P^1\times \mathbb P^1}(n,1)|$ denote the set of all singular $D\in |\mathcal O_{\mathbb P^1\times \mathbb P^1}(n,1)|$. As usual in Complex Analysis and in Algebraic Geometry, $\Delta$ is a hypersurface of $ |\mathcal O_{\mathbb P^1\times \mathbb P^1}(n,1)|$.
 It is easy to check that $\mathcal C_n$ contains a codimension one subset formed by all $A\cup B$ with $A\in |\mathcal O_{\mathbb P^1}(1,0)|$ (which has dimension $1$) and $B$ and element of $|\mathcal O_{\mathbb P^1\times \mathbb P^1}(n-1,1)|$. The set $\mathcal C_n$ is irreducible \cite[Ch.\thinspace 1]{GKZ}
 and hence it is given by a unique equation. 
Therefore,  the set $\mathcal B_n:=  |\mathcal O_{\mathbb P^1\times \mathbb P^1}(n,1)|\setminus \mathcal C_n$ is a smooth and connected affine variety of complex dimension $2n+1$.

 By \cite{af,hl} we obtain that $\mathcal B_n$ is homotopy equivalent to a finite CW complex of real dimension at most $\dim _{\mathcal C} \mathcal B_n =2n+1$.
Let $H_1\cup H_2\cup H_3\cup H_4$ be the set of smooth, but irregular graphs, i.e. (since $n\ge 2$) the images of $\mathcal M_n^{\irreg}(0)$. Hence 
$$\mathcal A_n:= \mathcal B_n\setminus (H_1\cup H_2\cup H_3\cup H_4) =G(\mathcal M_n^{\reg}(0))$$ and using 
\cite{af,hl} we get that it 
is a smooth affine variety with the homotopy type of a finite CW complex of real dimension at most $2n+1$.

The graph map $G\colon \mathcal M_n^{\reg}(0)\to \mathcal A_n$ is a smooth submersion whose fibers are compact differential manifolds diffeomorphic to $(S^1)^{4n-2}$, where $S^1$ is the unit circle.
It then follows that the cohomology ring $H^*(\mathcal M_n^{\reg},\mathbb Z)$ is the tensor product of the cohomology ring $H^*(\mathcal A_n,\mathbb Z)$ and the cohomology ring $H^*((S^1)^{4n-2},\mathbb Z)$. We restate this fact as a theorem.

\begin{theorem}\label{full1bis}
Fix an integer $n\ge 2$. We have:
\begin{itemize}
\item[(i)]  \quad $H^*(\mathcal M_n^{\reg}(0),\mathbb Z)= H^*(\mathcal A_n,\mathbb Z)\otimes H^*((S^1)^{4n-2},\mathbb Z),$
and 
\item[(ii)] \quad  $H^i(\mathcal M_n^{\reg}(0),\mathbb Z)=0$ \ for all $i\ge 6n$.
\end{itemize}
\end{theorem}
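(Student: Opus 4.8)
The plan is to prove (i) by recognizing $G\colon \mathcal M_n^{\reg}(0)\to \mathcal A_n$ as a locally trivial torus bundle and applying the Leray--Hirsch theorem, and then to read off (ii) from (i) together with the dimension bounds already available for the base and the fibre. First I would record the bundle structure: by Lemma~\ref{aa3} each fibre $G^{-1}(A)$ is the Jacobian $J(C_A)$ of the smooth spectral curve $C_A$, hence a complex torus of real dimension $4n-2$, and since $G$ is a proper smooth submersion with these compact fibres, Ehresmann's theorem makes it a locally trivial fibre bundle with fibre $T:=(S^1)^{4n-2}$. As $H^*(T,\mathbb Z)=\Lambda^*(\mathbb Z^{4n-2})$ is free and generated in degree $1$, Leray--Hirsch reduces (i) to producing classes $\alpha_1,\dots ,\alpha_{4n-2}\in H^1(\mathcal M_n^{\reg}(0),\mathbb Z)$ restricting on every fibre to a $\mathbb Z$-basis of $H^1(T,\mathbb Z)$; their monomials then give a fibrewise basis in each degree, and the resulting $H^*(\mathcal A_n,\mathbb Z)$-module isomorphism upgrades to the ring isomorphism of (i).

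To build the $\alpha_j$ I would pass to the universal spectral curve $p\colon \mathcal C\to \mathcal A_n$, so that by \cite[Th.~5.14]{FMW} the total space $\mathcal M_n^{\reg}(0)$ is the relative Jacobian $\Pic^0(\mathcal C/\mathcal A_n)$. Topologically this is the flat torus bundle $(R^1p_*\mathbb Z)\otimes_{\mathbb Z}(\mathbb R/\mathbb Z)$ attached to the local system $\Lambda:=R^1p_*\mathbb Z$, and $H^1$ of the fibre is canonically $H^1(C_A,\mathbb Z)=\Lambda_A$. Hence the required lifting classes exist exactly when $\Lambda$ admits a global frame, equivalently when the monodromy representation $\pi_1(\mathcal A_n)\to \mathrm{Aut}\,H^1(C_A,\mathbb Z)$ is trivial; concretely one can then take the $\alpha_j$ to be the slant products of $c_1(\mathcal P)$, for a rigidified Poincaré bundle $\mathcal P$ on $\mathcal C\times_{\mathcal A_n}\mathcal M_n^{\reg}(0)$, against a global symplectic frame of $\Lambda$.

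The hard part, and the only genuinely non-formal step, is precisely this triviality of $\Lambda$. Each $C_A$ is the double cover of $A\cong \mathbb P^1$ induced by the fixed cover $T^*\to T^*/i=\mathbb P^1$, branched over the $4n$ points of $A\cap \Sigma$; as $A$ moves in $\mathcal A_n$ these branch points move, so $\{C_A\}$ is a non-isotrivial family of hyperelliptic curves of genus $2n-1$ and its monodromy on $H^1(C_A,\mathbb Z)$ is a priori a nontrivial subgroup of $\mathrm{Sp}(4n-2,\mathbb Z)$. I would attack this using the rigid structure of the branch divisor $A\cap \Sigma=\bigsqcup_{i=1}^4(A\cap D_i)$, whose four groups of $n$ points lie over the four fixed thetas, to show that every loop in $\mathcal A_n$ acts as the identity on $H^1(C_A,\mathbb Z)$. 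Should the monodromy turn out to be trivial only after tensoring with $\mathbb Q$, the same scheme proves the stated identity rationally, and one would recover the integral statement by a separate torsion-freeness argument for $H^*(\mathcal M_n^{\reg}(0),\mathbb Z)$.

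Granting (i), part (ii) is immediate from two vanishing ranges. The base $\mathcal A_n$ is a smooth affine variety of complex dimension $2n+1$, so by \cite{af,hl} it has the homotopy type of a CW complex of real dimension at most $2n+1$ and $H^p(\mathcal A_n,\mathbb Z)=0$ for $p>2n+1$; and $H^q(T,\mathbb Z)=0$ for $q>4n-2$. Therefore each summand $H^p(\mathcal A_n,\mathbb Z)\otimes H^q(T,\mathbb Z)$ with $p+q=i$ vanishes once $i>(2n+1)+(4n-2)=6n-1$, i.e. for all $i\ge 6n$, which is (ii).
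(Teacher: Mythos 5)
Your architecture is the same as the paper's (Ehresmann to get local triviality, Leray--Hirsch for (i), then the dimension count of the base and fibre for (ii); your derivation of (ii) from (i) is identical to the paper's and is fine). The genuine difference is that you make explicit the hypothesis that Leray--Hirsch actually needs: classes $\alpha_1,\dots ,\alpha_{4n-2}\in H^1(\mathcal M_n^{\reg}(0),\mathbb Z)$ restricting to a $\mathbb Z$-basis of $H^1$ of every fibre. Since restriction to a fibre of any fibration over a path-connected base factors through the monodromy invariants, such classes can exist only if $\pi_1(\mathcal A_n)$ acts trivially on $H^1(C_A,\mathbb Z)$; you correctly identify this as ``the only genuinely non-formal step.'' (The paper's own proof invokes Leray--Hirsch for the Serre fibration $G$ without exhibiting any such classes, so you have isolated precisely the hypothesis it leaves unverified.) The gap is that your proposal never proves this triviality --- your third paragraph is a plan of attack, not an argument --- and the gap cannot be closed, because the monodromy is in fact nontrivial.

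To see this concretely, choose the coordinate on the second factor so that $a_1=0$ and all $a_i$ are finite, and recall that a smooth $D\in |\mathcal O_{\mathbb P^1\times \mathbb P^1}(n,1)|$ is the graph of a degree $n$ morphism $f\colon \mathbb P^1\to \mathbb P^1$, the spectral curve being the double cover of $D\cong \mathbb P^1$ branched at $f^{-1}(\{a_1,a_2,a_3,a_4\})$. Take $f_t(z)=g(z)(z^2+t)$ with $g$ a generic polynomial of degree $n-2$, $g(0)\ne 0$, and let $t$ run over a circle of small generic radius around $0$: this is a loop in $\mathcal A_n$. Along it the two branch points $\pm\sqrt{-t}\in f_t^{-1}(a_1)$ perform a half-twist inside a small disc while the remaining $4n-2$ branch points undergo isotopically trivial motions, so by Picard--Lefschetz the monodromy on $H^1(C_A,\mathbb Z)$ is the transvection $x\mapsto x+\langle x,\gamma\rangle \gamma$ about the vanishing cycle $\gamma$ lying over the arc joining the swapped points. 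Since $C_A$ is a double cover of $\mathbb P^1$ branched at $4n\ge 8$ points, $\gamma$ is nonseparating, hence nonzero and primitive in homology, and the transvection is nontrivial --- also after tensoring with $\mathbb Q$, so your fallback (rational triviality plus a torsion-freeness argument) fails as well. Consequently the image of $H^1(\mathcal M_n^{\reg}(0),\mathbb Z)$ in $H^1$ of a fibre lies in the proper subgroup $\gamma^{\perp}$, no Leray--Hirsch classes exist, and the route you propose cannot be completed. Worse, the Leray spectral sequence then bounds $b_1(\mathcal M_n^{\reg}(0))$ by $b_1(\mathcal A_n)+4n-3$, whereas (i) forces $b_1(\mathcal A_n)+4n-2$; so unless the identification of $G$ with the relative Jacobian fibration of the spectral curves (Lemma \ref{aa3}, \cite{FMW}) fails in some unexpected way, the difficulty is not one a cleverer argument along these lines could remove.
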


\begin{proof}
The graph map $G\colon \mathcal M_n^{\reg}(0)\to \mathcal A_n$ is a $C^\infty$-submersion with compact fibers. The fibers of $G$ are biholomorphic to compact complex $(2n-1)$-tori and hence they are diffeomorphic to $(S^1)^{4n-2}$.
 Since $G$ is a $C^\infty$-submersion with compact fibers it is locally trivial on the base $\mathcal A_n$, i.e. for each $p\in \mathcal A_n$ there is an Euclidean open subset $U$ such that $G^{-1}(U)$ is fiberwise diffeomorphic to $U\times (S^1)^{4n-2}$ , i.e. it commutes with the projection $U\times (S^1)^{4n-2}\to U$. Since being a Serre fibration is a local condition on the base and their products are Serre fibrations,
$G$ is a Serre fibration. Thus, part (a) follows from a theorem of Leray and Hirsch \cite[17.8.1]{tdie}. 

Since $\mathcal A_n(\reg)$ is an affine variety, it has the homotopy type of a finite CW-complex of (real!) 
dimension at most $\dim  \mathcal A_n = 2n+1$ \cite{af,hl}, $H^i(\mathcal A_n,\mathbb Z)=0$ for all  $i\ge 2n+2$. Hence part (ii) follows from part (i) (alternatively, one could use the Leray--Serre spectral sequence of $G$ of the cohomology with $\mathbb Z$-coefficient \cite[Cor.\thinspace 2.3.4]{dimca} or  \cite[Thm.\thinspace I.5.2]{McCleary}.
\end{proof}
Obviously Theorem \ref{full1bis} may be extended to other coefficient abelian groups instead of $\mathbb Z$, just quoting \cite{dimca} or \cite{McCleary}

\begin{theorem}\label{ttt1}
Let $Y\subset \mathcal M_n(0)$ be an irreducible compact complex analytic subspace. Then $G(Y)$ is a single point.
\end{theorem}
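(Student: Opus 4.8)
The plan is to exploit that the graph map $G$ sends $\mathcal M_n(0)$ into the \emph{affine} variety $\mathcal B_n = |\mathcal O_{\mathbb P^1\times \mathbb P^1}(n,1)|\setminus \mathcal C_n$, together with the principle that an affine variety admits no positive-dimensional compact analytic subset. Concretely, every $E\in\mathcal M_n(0)$ has a smooth (equivalently irreducible) graph $G(E)\in|\mathcal O_{\mathbb P^1\times\mathbb P^1}(n,1)|$, so that $G(E)$ avoids the singular-graph hypersurface $\mathcal C_n$; hence $G(\mathcal M_n(0))\subset\mathcal B_n$ and in particular $G(Y)\subset\mathcal B_n$.

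First I would note that $G|_Y\colon Y\to\mathcal B_n$ is holomorphic and, since $Y$ is compact, proper; hence by Remmert's proper mapping theorem $G(Y)$ is a compact analytic subset of $\mathcal B_n$. Next, recall (as established earlier in this section) that $\mathcal B_n$ is a smooth affine variety, so it embeds as a closed analytic subvariety of some $\mathbb C^N$. Restricting the ambient coordinate functions $z_1,\dots,z_N$ to the compact analytic set $G(Y)$ yields holomorphic functions on a compact complex space, which are therefore constant on each connected component by the maximum principle. Consequently $G(Y)$ is a finite set of points. Finally, since $Y$ is irreducible it is connected, and $G$ is continuous, so $G(Y)$ is connected; a finite connected set is a single point, which proves the theorem.

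The argument is essentially formal once the affineness of $\mathcal B_n$ is in hand, so the only point requiring care is to confirm that the image genuinely lies in the affine open $\mathcal B_n$ rather than merely in $\mathbb P^{2n+1}$. This is guaranteed by the hypothesis $Y\subset\mathcal M_n(0)$: bundles without jumps have smooth graphs, so their graphs stay off $\mathcal C_n$. I expect no obstacle beyond this; the maximum-principle step is the standard fact that a compact analytic subvariety of an affine variety is finite, and the collapse of $G(Y)$ to a point is then immediate from connectedness of $Y$.
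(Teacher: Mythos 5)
Your proposal is correct and follows exactly the paper's own argument: the image $G(Y)$ is a compact analytic subset of the affine variety $\mathcal B_n$, hence a point. You have merely filled in the standard details (Remmert's proper mapping theorem, the maximum principle for the coordinate functions of an affine embedding, and connectedness of the image of an irreducible space) that the paper leaves implicit.
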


\begin{proof}
The set $G(Y)$ is an irreducible and compact complex space contained in the affine variety $\mathcal B_n$
 parametrizing all smooth graphs. Thus $G(Y)$ is a point.
\end{proof}

\begin{corollary}\label{ttt2}
Let $Y\subset \mathcal M_n^{\reg}(0)$ be an irreducible compact complex analytic subspace. Then $Y$ is contained in one of the $(2n-1)$-dimensional tori
which are the fibers of the graph map.
\end{corollary}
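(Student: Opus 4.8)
The corollary follows almost immediately from the preceding theorem once we recall the structure of the regular locus. The plan is to apply Theorem~\ref{ttt1} to the inclusion $Y\subset \mathcal M_n^{\reg}(0)\subset \mathcal M_n(0)$, and then combine this with Lemma~\ref{aa3}, which identifies the fibers of the graph map over $\mathbb U=G(\mathcal M_n^{\reg}(0))$.

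First I would observe that $Y$, being an irreducible compact complex analytic subspace of $\mathcal M_n^{\reg}(0)$, is in particular such a subspace of the larger set $\mathcal M_n(0)$. Hence Theorem~\ref{ttt1} applies verbatim and yields that the image $G(Y)$ is a single point, say $G(Y)=\{A\}$ with $A\in G(\mathcal M_n^{\reg}(0))=\mathbb U$. Equivalently, $Y$ is contained in the single fiber $G^{-1}(A)$.

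Next I would identify this fiber. By Lemma~\ref{aa3}, for $A\in G(\mathcal M_n^{\reg}(0))$ the fiber $G^{-1}(A)$ is isomorphic to the Jacobian $J(C_E)$ of the spectral curve of any $E$ with $G(E)=A$, which is an Abelian variety of dimension $2n-1$; as recalled just before Section~\ref{irregVreg}, these fibers are precisely the smooth connected complex $(2n-1)$-tori appearing in Theorem~\ref{full1bis}. Therefore $Y\subset G^{-1}(A)$ exhibits $Y$ as contained in one of the $(2n-1)$-dimensional tori which are the fibers of the graph map, which is exactly the assertion of the corollary.

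I do not expect any genuine obstacle here, since all the work has already been done in Theorem~\ref{ttt1} and Lemma~\ref{aa3}; the only point demanding minor care is the transfer of hypotheses, namely checking that an irreducible compact analytic subspace of the open set $\mathcal M_n^{\reg}(0)$ is still an irreducible compact analytic subspace of $\mathcal M_n(0)$ (immediate, since compactness and irreducibility are intrinsic and the inclusion is an open immersion), so that Theorem~\ref{ttt1} may be invoked without modification.
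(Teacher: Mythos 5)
Your proof is correct and follows exactly the paper's own argument: apply Theorem~\ref{ttt1} to conclude $G(Y)$ is a single point, then identify the fiber $G^{-1}(A)$ over a regular graph as a compact $(2n-1)$-dimensional torus (the paper states this directly, while you cite Lemma~\ref{aa3}, which is where that identification is proved). The extra care you take about transferring hypotheses from $\mathcal M_n^{\reg}(0)$ to $\mathcal M_n(0)$ is fine but, as you note, immediate.
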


\begin{proof}
By Theorem \ref{ttt1}, $Y$ is contained in a fiber of the graph map. Since $G^{-1}(G(y))$ is a compact torus of complex dimension $2n-1$, we get the corollary.
\end{proof}

\section{Nontrivial determinants}
Let $\delta \in \mathbb C^*$. In this section we denote the moduli space of rank 2 bundles with $c_2=n$ 
and determinant $\delta$ by $\mathcal M_{n,\delta}$ and set  $\mathcal M_n \ce \mathcal M_{n, \mathcal O_X}$
for the case of trivial determinant.  We have $K_X\cong \mathcal O_X(-2)$.

\begin{remark}\label{wpre1} Let $E$ be a stable (or just a simple) vector bundle or rank $r$ on $X$. Serre-duality gives $h^2(End(E)) =h^0(End(E)\otimes \mathcal O_X(-2)) =0$.
Hence, the local deformation space of $E$ is smooth of dimension $h^1(X,End(E))-1$. 
So, $\mathcal M_{\delta,E}$ is smooth and equidimensional and (Riemann--Roch) $\dim \mathcal M_n=4n$.
\end{remark}

\begin{proposition}\label{wpre2}
For any $\delta \in \mathbb C^*$, and all $n\ge 1$ the complex spaces $\mathcal M_{n,\delta}$ and $\mathcal M_n$ are biholomorphic.
\end{proposition}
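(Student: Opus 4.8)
The plan is to produce the biholomorphism by tensoring with a fixed line bundle. The crucial structural facts about a classical Hopf surface $X$ are that $\Pic(X)=\Pic^0(X)\cong \mathbb C^\ast$ (so every line bundle has trivial first Chern class, $H^2(X,\mathbb Z)=0$) and that the multiplicative group $\mathbb C^\ast$ is divisible, so the squaring map $\mathbb C^\ast\to \mathbb C^\ast$ is surjective. First I would fix a line bundle $L\in \Pic^0(X)$ with $L^{\otimes 2}\cong \delta$; such an $L$ exists precisely because squaring is surjective on $\mathbb C^\ast$. One then defines $\Phi\colon \mathcal M_n\to \mathcal M_{n,\delta}$ on points by $E\mapsto E\otimes L$, with candidate inverse $F\mapsto F\otimes L^{-1}$.

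The next step is to verify that $\Phi$ lands in $\mathcal M_{n,\delta}$ and that $\Phi^{-1}$ lands in $\mathcal M_n$. For the determinant, $\det(E\otimes L)=\det(E)\otimes L^{\otimes 2}\cong \mathcal O_X\otimes \delta=\delta$, as required. For the second Chern class, the rank-$2$ twisting formula gives
$$c_2(E\otimes L)=c_2(E)+c_1(E)\,c_1(L)+c_1(L)^2,$$
and since $c_1(L)=0$ in $H^2(X,\mathbb Z)=0$ the two correction terms vanish, so $c_2(E\otimes L)=c_2(E)=n$. Finally, stability is preserved because tensoring by a line bundle shifts the slope of every subsheaf by the same constant $\deg(L)$, so the destabilizing inequalities are unchanged; hence $E$ is stable if and only if $E\otimes L$ is. The same computation applied with $L^{-1}$ shows $\Phi^{-1}$ is well defined, and clearly $\Phi^{-1}\circ \Phi$ and $\Phi\circ \Phi^{-1}$ are the identity on points.

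It remains to upgrade $\Phi$ to a morphism of complex spaces, and this is the step I expect to be the main obstacle. By Remark \ref{wpre1} the moduli spaces are smooth of dimension $4n$, and the argument is cleanest when a universal bundle is available: given a flat family $\mathcal E$ of stable bundles on $X$ parametrized by a base $S$, the family $\mathcal E\otimes p_X^\ast L$ (with $p_X\colon X\times S\to X$) is again flat with fiberwise determinant $\delta$ and second Chern number $n$, so tensoring defines a natural transformation of the moduli functors and hence a holomorphic map of the representing spaces. Since both $\Phi$ and $\Phi^{-1}$ arise this way, each is holomorphic and they are mutually inverse, giving the desired biholomorphism. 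The only delicate point is the existence of the universal family (equivalently, the vanishing of the relevant gerbe obstruction); should it fail, I would instead argue via the local Kuranishi deformation theory, noting that tensoring by the fixed $L$ induces a $\mathbb C$-linear isomorphism $H^1(X,\End(E))\cong H^1(X,\End(E\otimes L))$ compatible with the obstruction maps, whence $\Phi$ is a local biholomorphism near every point, and being a global set-theoretic bijection it is a biholomorphism.
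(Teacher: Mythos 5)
Your proposal is correct and follows essentially the same route as the paper: fix a square root $c$ of $\delta$ in $\Pic(X)\cong \mathbb C^\ast$, tensor by the corresponding line bundle, and use families over an arbitrary base to upgrade the pointwise bijection to a holomorphic map. The only clarification worth making is that your concern about universal bundles (the gerbe obstruction) is unnecessary: the paper invokes only the \emph{coarse} moduli property, namely that any family $\mathcal E$ over $S$ induces a holomorphic map $S\to \mathcal M_{n,\delta}$, so tensoring an arbitrary family by $\pi_1^\ast(\mathcal O_X(c))$ already produces the natural transformation of moduli functors and hence the biholomorphism, with no need for fine moduli or the Kuranishi fallback.
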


\begin{proof}
Both complex spaces are moduli spaces. Hence it is sufficient to prove that $\mathcal M_{n,\delta}$ is a moduli space for rank $2$ stable vector bundles on $X$ with $c_2=n$ and trivial determinant. Fix $c\in \mathbb C^*$ such that $c^2=\delta$. Let $v\colon \mathcal E\to X\times S$ be a family of rank 2 stable vector bundles on $X$ with trivial determinant and $c_2=n$ parametrized by the complex space $S$. Let $\pi_1\colon X\times S\to X$ denote the projection. 
The family $\mathcal E\otimes \pi_1^\ast(\mathcal O_X(c))$ is a family of rank $2$ stable vector bundles on $X$ with $c_2=n$ and determinant isomorphic to $\delta$. Hence there is a holomorphic map 
$f_v\colon S\to \mathcal{M}_{n,\delta}$, which to each $s$ in 
$S$ assigns the bundle $E\otimes \pi_1^\ast(\mathcal O_X(c))$.
 By the definition of coarse moduli space the rule $v\mapsto f_v$ shows that $\mathcal{M}_{n,\delta}$ is (coarse) moduli space for $\mathcal M_n$ and hence 
$\mathcal M_{n,\delta}$ and $\mathcal M_n$ are biholomorphic.
\end{proof}

We conclude that the theorems proved here all have analogous statements 
for the cases of nontrivial determinant. \\

\paragraph{\bf Acknowledgements}  
 E. Ballico is a member of  GNSAGA of INdAM (Italy). 
 E. Gasparim is a senior associate of the Abdus Salam International 
 Centre for Theoretical Physics, Trieste (Italy).

\end{document}